\documentclass[12pt]{article}
\usepackage{amsmath,amsthm,amssymb, color}
\voffset-2.5 cm
\hoffset -1.5 cm
\textwidth 16.5 cm
\textheight 23 cm
\thispagestyle{empty}
\oddsidemargin 15mm
\evensidemargin 10mm
\input amssym.def
\input amssym.tex
\date{}
\newtheorem{lemma}{Lemma}
\newtheorem{proposition}{Proposition}
\newtheorem{theorem}{THEOREM}
\newtheorem{corollary}{Corollary}
\newtheorem{remark}{Remark}

%%%%%%%%%%%%%%%%%%%%% 20 October 2014   %%%%%%%%%%%%%%%%%%%%%%
\begin{document}

\title{{\bf Rademacher functions in Morrey spaces}}

\author {Sergei V. Astashkin
%\thanks{Research partially supported by
%the Ministry of Education and Science of the Russian Federation
%grant 3341.} 
{\small and} Lech Maligranda}

\date{}

\maketitle

\renewcommand{\thefootnote}{\fnsymbol{footnote}}

\footnotetext[0]{2000 {\it Mathematics Subject Classification}:
46E30, 46B20, 46B42}
\footnotetext[0]{{\it Key words and phrases}: Rademacher functions, Morrey spaces, Korenblyum-Kre\u\i n-Levin 
spaces, Marcinkiewicz spaces, complemented subspaces}

\vspace{-5mm}

\begin{abstract}
\noindent {\footnotesize The Rademacher sums are investigated in the Morrey spaces $M_{p, w}$ on $[0, 1]$ for 
$1 \leq p < \infty$ and weight $w$ being a quasi-concave function. They span $l_2$ space in $M_{p, w}$ if and 
only if the weight $w$ is smaller than $\log_2^{-1/2} \frac{2}{t}$ on $(0, 1)$. Moreover, if $1 < p < \infty$ the 
Rademacher sunspace ${\cal R}_p$ is complemented in $M_{p, w}$ if and only if it is isomorphic to $l_2$. 
However, the Rademacher subspace ${\cal R}_1$ is not complemented in $M_{1, w}$ for any quasi-concave 
weight $w$. In the last part of the paper geometric structure of Rademacher subspaces in Morrey spaces $M_{p, w}$ is described. 
It turns out that for any infinite-dimensional subspace $X$ of ${\cal R}_p$ the following alternative holds: either $X$ 
is isomorphic to $l_2$ or $X$ contains a subspace which is isomorphic to $c_0$ and is complemented in ${\cal R}_p$.}
\end{abstract}

%%%%%%%%%%%%%%%%%%%%%%%%% Section 1
\section{Introduction and preliminaries}

The well-known Morrey spaces introduced by Morrey in 1938 \cite{Mo38} in relation to the study of partial differential 
equations were widely investigated during last decades, including the study of classical operators of harmonic 
analysis: maximal, singular and potential operators -- in various generalizations of these spaces. In the theory of partial 
differential equations, along with the weighted Lebesgue spaces, Morrey-type spaces also play an important role. 
They appeared to be quite useful in the study of the local behavior of the solutions of partial differential equations, a priori 
estimates and other topics.

Let $0 < p < \infty$, $w$ be a non-negative non-decreasing function on $[0, \infty),$ and $\Omega$ a domain in $\mathbb R^n$. 
The {\it Morrey space} $M_{p, w} = M_{p, w}(\Omega)$ is the class of Lebesgue measurable real functions $f$ on $\Omega$ 
such that 
%%%%%%%%%%%%%%%%% 1
\begin{equation} \label{1}
\|f\|_{M_{p, w}} = \sup_{0 < r < {\rm diam}(\Omega), \, x_0 \in \Omega} ~ w(r) \left( \frac{1}{r} \int_{B_r(x_0) \cap \Omega} | f(t) |^p 
\,dt \right)^{1/p} < \infty,
\end{equation}
where $B_r(x_0)$ is a ball with the center at $x_0$ and radius $r$. It is a quasi-Banach ideal space on $\Omega$. 
The so-called ideal property means that if $|f| \leq |g|$ a.e. on $\Omega$ and $g \in M_{p, w}$, then $ f\in M_{p, w}$ and 
$\|f\|_{M_{p, w}} \leq \|g \|_{M_{p, w}}$. In particular, if $w(r) = 1$ then $M_{p, w}(\Omega) = L_{\infty}(\Omega)$, if $w(r) = r^{1/p}$ 
then $M_{p, w}(\Omega) = L_p(\Omega)$ and in the case when $w(r) = r^{1/q}$ with $0 < p \leq q <\infty$ $M_{p,w}(\Omega)$ are the
classical Morrey spaces, denoted shortly by $M_{p, q}(\Omega)$ (see \cite[Part 4.3]{KJF77}, \cite{LR13}, \cite{Pe69} and \cite{Zo86}). 
Moreover, as a consequence of the H\"older-Rogers inequality we obtain monotonicity with respect to $p$, that is,
$$
M_{p_1, w}(\Omega) \stackrel {1}\hookrightarrow M_{p_0, w}(\Omega) ~~ {\rm if} ~~ 0 < p_0 \leq p_1 < \infty.
$$
For two quasi-Banach spaces $X$ and $Y$ the symbol $X \stackrel {C}\hookrightarrow Y$ means that the embedding 
$X \subset Y$ is continuous  and $\|f\|_{Y} \leq C \|f\|_{X}$ for all $f \in X$.

It is easy to see that in the case when $\Omega = [0, 1]$ quasi-norm (\ref{1}) can be defined as follows
%%%%%%%%%%%%%%%%%%%%% 2 
\begin{equation} \label{2}
\|f\|_{M_{p, w}} = \sup_{I} ~ w(|I|) \left( \frac{1}{|I|}\int_I | f(t) |^p\,dt \right)^{1/p},
\end{equation}
where the supremum is taken over all intervals $I$ in $[0, 1]$.  In what follows $| E|$ is the Lebesgue measure of a set $E \subset \mathbb R$.

The main purpose of this paper is the investigation of the behaviour of Rademacher sums
$$
R_n(t) = \sum_{k=1}^n a_k r_k(t), ~ a_k \in {\Bbb R} ~{\rm for} ~k = 1, 2, ..., n, ~{\rm and} ~ n \in {\Bbb N}
$$
in general Morrey spaces $M_{p, w}$. Recall that the Rademacher functions on $[0, 1]$ are defined by 
$r_k(t) = {\rm sign}(\sin 2^k \pi t), ~ k \in {\Bbb N}, t \in [0, 1]$. 

The most important tool in studying Rademacher sums in the classical $L_p$-spaces and in general rearrangement invariant 
spaces is the so-called {\it Khintchine inequality} (cf. \cite[p. 10]{DJT}, \cite[p. 133]{AK06}, \cite[p. 66]{LT77} and \cite[p. 743]{As09}): 
if $0 < p < \infty$, then there exist constants $A_p, B_p > 0$ such that for any sequence of real numbers $\{a_k\}_{k=1}^n$ and any 
$n \in {\Bbb N}$ we have
%%%%%%%%%%%%% 3
\begin{equation}\label{3}
A_p \Big(\sum_{k=1}^n |a_k|^2 \Big)^{1/2} \leq \| R_n \|_{L_p[0, 1]} \leq B_p \Big(\sum_{k=1}^n |a_k |^2\Big)^{1/2}.
\end{equation}
Therefore, for any $1 \leq p < \infty$, the Rademacher functions span in $L_p$ an isomorphic copy of $l_2$. 
Also, the subspace $[r_n] $ is complemented in $L_p$ for $1 < p < \infty$ and is not 
complemented in $L_1$ since no complemented infinite dimensional subspace of $L_1$ can be reflexive. In $L_{\infty}$, the 
Rademacher functions span an isometric copy of $l_1$, which is uncomplemented.

The only non-trivial estimate for Rademacher sums in a general rearrangement invariant (r.i.) space $X$ on $[0,1]$ 
is the inequality
%%%%%%%%%%%%%%%%%%%%% 4
\begin{equation}\label{4}
\| R_n \|_{X}  \leq C \, \Big(\sum_{k=1}^{n} |a_k|^2 \Big)^{1/2},
\end{equation}
%%%%%%%%%%%%
where a constant $C> 0$ depends only on $X$. The reverse inequality to (\ref{4}) is always true because $X \subset L_1$ and we 
can apply the left-hand side inequality from (\ref{3}) for $L_1$. Paley and Zygmund \cite{PZ30} proved already in 1930 that 
estimate (\ref{4}) holds for $X=G$, where $G$ is the closure of $L_{\infty}[0, 1]$ in the Orlicz space $L_M[0, 1]$ generated by the 
function $M(u) = e^{u^2} -1$. The proof can be found in Zygmund's classical books \cite[p. 134] {Zy35} and \cite[p. 214]{Zy59}. 

Later on Rodin and Semenov \cite{RS75} showed that estimate (\ref{4}) holds if and only if $G \subset X$. This inclusion means 
that $X$ in a certain sense ``lies far'' from $L_{\infty}[0,1]$. In particular, $G$ is contained in every $L_p[0,1]$ for $p < \infty$.
Moreover, Rodin-Semenov \cite{RS79} and Lindenstrauss-Tzafriri \cite[pp. 134-138]{LT} proved that  $[r_n]$ is complemented in 
$X$ if and only if $G \subset X \subset G^{\prime}$, where $G^{\prime}$ denotes the K\"othe dual space to $G$.

In contrast, Astashkin  \cite{As01} studied the Rademacher sums in r.i. spaces which are situated very ``close" to $L_{\infty}$. 
In such a case a rather precise description of their behaviour may be obtained by using the real method of interpolation 
(cf. \cite{BK91}). Namely, every space $X$ that is  interpolation between the spaces $L_{\infty}$ and $G$ can be represented 
in the form $X = (L_\infty,G)_\Phi^K$, for some parameter $\Phi$ of the real interpolation method, and then 
$\|\sum_{k=1}^{\infty} a_k r_k \|_X \approx \| \{a_k\}_{k=1}^{\infty} \|_F$, where $F=(l_1,l_2)_\Phi^K$.

Investigations of Rademacher sums in r.i. spaces are well presented in the books by Lindenstrauss-Tzafriri \cite{LT}, 
Krein-Petunin-Semenov \cite{KPS} and Astashkin \cite{As09}. At the same time, a very few papers are devoted to considering 
Rademacher functions in Banach function spaces, which are not r.i. Recently, Astashkin-Maligranda \cite{AM10} initiated studying 
the behaviour of Rademacher sums in a weighted Korenblyum-Kre\u\i n-Levin space $K_{p,w}$, for $0 < p < \infty$ and  
a quasi-concave function $w$ on $[0, 1]$, equipped with the quasi-norm
%%%%%%%%%%%%%%%%%%%%% 5
\begin{equation}\label{5}
\|f\|_{K_{p, w}} = \sup_{0 < x \leq 1} w(x) \, \left( \frac{1}{x}\int_{0}^{x} | f(t) |^p\,dt \right)^{1/p}
\end{equation}
(cf. \cite{KKL}, \cite{LZ66}, \cite[pp. 469-470]{Za83}, where $w(x) = 1$). If the supremum in (\ref{2}) is taken over all subsets of 
$[0, 1]$ of measure $x$, then we obtain an r.i. counterpart of the spaces $M_{p, w}$ and $K_{p, w}$, the Marcinkiewicz space 
$M_{p, w}^{(*)}[0, 1],$ with the quasi-norm
%%%%%%%%%%%%%%%%%%%%% 6
\begin{equation} \label{6}
\|f\|_{M_{p, w}^{(*)}} = \sup_{0 < x \leq 1} ~ w(x) \left( \frac{1}{x}\int_0^x f^*(t)^p\,dt \right)^{1/p},
\end{equation}
where $f^*$ denotes the non-increasing rearrangement of $| f |$.

In what follows we consider only function spaces on $[0, 1]$. Therefore, the weight $w$ will be a non-negative non-decreasing 
function on $[0, 1]$ and without loss of generality we will assume in the rest of the paper that $w(1) = 1$. Then, we have
%%%%%%%%%%%%%%%%%%% 7
\begin{equation}  \label{7}
L_{\infty} \stackrel {1} \hookrightarrow M_{p, w}^{(*)} \stackrel {1}\hookrightarrow  M_{p, w} \stackrel {1}\hookrightarrow K_{p, w} \stackrel {1} \hookrightarrow L_p
\end{equation} 
because the corresponding suprema in (\ref{5}), (\ref{2}) and (\ref{6}) are taken over larger classes of subsets of $[0, 1]$. 

Observe that if $\lim_{t \rightarrow 0^+} w(t) > 0$, then $M_{p, w} = M_{p, w}^{(*)} = L_{\infty}$, and if 
$\sup_{0 < t \leq 1} w(t) \, t^{-1/p}  < \infty$, then $ M_{p, w} = L_p$ with equivalent quasi-norms. However, under appropriate 
assumptions on a weight $w$ the second and the third inclusions in (\ref{7}) are proper.

%%%%%%%%%%%%%%%%%%%%%% Proposition 1
\begin{proposition} \label{Pro1} 
\begin{itemize} 
\item[(i)] If $\lim_{t \rightarrow 0^+} w(t) \, t^{-1/p} = \infty$, then there exists $f \in K_{p, w} \setminus M_{p, w}$.
\item[(ii)] If $w(t) \, t^{-1/p}$ is a non-increasing function on $(0, 1]$ and $\lim_{t \rightarrow 0^+} w(t) = 
\lim_{t \rightarrow 0^+} \frac{t^{1/p}}{w(t)} = 0$, then there exists $g \in M_{p, w} \setminus M_{p, w}^{(*)}$.
\end{itemize}
\end{proposition}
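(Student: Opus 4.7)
Both parts proceed by explicit construction, exploiting the hypothesis $w(t)/t^{1/p}\to\infty$ to build a scale mismatch that $M_{p,w}$ and $K_{p,w}$ (respectively $M_{p,w}^{(*)}$) measure differently.

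For part (i), I would take $f=\sum_{n=1}^\infty c_n\chi_{I_n}$ with $I_n\subset[1/2,1]$ disjoint intervals of lengths $\delta_n\to 0$. The hypothesis lets me choose $\delta_n$ with $w(\delta_n)/\delta_n^{1/p}\ge 2^n$; I set $c_n=1/(n^2\delta_n^{1/p})$. Since $f$ vanishes on $[0,1/2]$, $\int_0^x|f|^p\le\sum c_n^p\delta_n=\sum 1/n^{2p}<\infty$, and since $w(x)/x^{1/p}$ is bounded on $[1/2,1]$, the supremum in (5) is finite, so $f\in K_{p,w}$. On the other hand, taking $I=I_n$ in (2) gives $\|f\|_{M_{p,w}}\ge c_n w(\delta_n)\ge 2^n/n^2\to\infty$, so $f\notin M_{p,w}$.

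For part (ii), the plan is $g=\sum_{k=1}^\infty\alpha_k\chi_{F_k}$ where each $F_k$ is a union of many small subintervals placed inside the dyadic piece $D_k=[2^{-k},2^{-k+1}]$; the $D_k$'s are disjoint, so $\int_I|g|^p=\sum_k\alpha_k^p|I\cap F_k|$ for every interval $I$. The hypothesis $t^{1/p}/w(t)\to 0$ gives $m_k\le 2^{-k-1}$ tending to $0$ with $r(m_k):=w(m_k)/m_k^{1/p}\ge k\cdot 2^k$; then $w(t)\to 0$ gives $\ell_k<m_k$ with $w(\ell_k)\le w(m_k)/k$. With $F_k\subset D_k$ the union of $N_k=\lfloor m_k/\ell_k\rfloor$ intervals of length $\ell_k$ equally spaced in $D_k$, and $\alpha_k=1/w(\ell_k)$, one reads off from (6) at $x=m_k$, combined with $\int_0^{m_k}g^{*p}\ge\int_{F_k}|g|^p=\alpha_k^p m_k$, that $\|g\|_{M_{p,w}^{(*)}}\ge w(m_k)\alpha_k=w(m_k)/w(\ell_k)\ge k$, so $g\notin M_{p,w}^{(*)}$.

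The technical core is $\|g\|_{M_{p,w}}<\infty$. The equally spaced placement yields the density bound $|I\cap F_k|\le |I\cap D_k|\cdot m_k\,2^k+\ell_k$ for every interval $I$. The choices above give the two key identities $\alpha_k^p m_k=k^p/r(m_k)^p\le 1/2^{kp}$ (summable over $k$) and $\alpha_k w(\ell_k)=1$. Case-splitting by whether $I$ lies in a single $D_k$ or straddles several, the \emph{local} contribution stays bounded by the identity $\alpha_k w(\ell_k)=1$ together with the non-increasing property of $w(t)/t^{1/p}$, and the \emph{global} contribution is bounded by the geometric tail $\sum 1/2^{kp}$. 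I expect the main obstacle to be the boundary interval $I=[0,L]$ with $L$ small, which overlaps infinitely many $D_k$; its contribution is $\lesssim L^p$, which rescaled by $w(L)^p/L$ stays bounded for $p\ge 1$ because $w(L)\le 1=w(1)$. The construction carries enough flexibility (making $m_k$ smaller and $r(m_k)$ larger) to accommodate the case $p<1$.
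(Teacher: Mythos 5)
Your proof is correct in substance, and the two parts relate to the paper's proof quite differently. For (i) you and the paper do essentially the same thing: build an $L_p$ function supported in $[1/2,1]$ (so that the $K_{p,w}$-supremum only sees $x\ge 1/2$, where $w(x)x^{-1/p}$ is bounded) while planting mass on short intervals that make the local averages in \eqref{2} blow up against $w$. The paper uses a single telescoping function tested on the nested intervals $[\tfrac12,t_k+\tfrac12]$, you use disjoint spikes tested on themselves; both work, and yours is if anything more transparent. (One pedantic point: with $c_n=n^{-2}\delta_n^{-1/p}$ you need $\sum n^{-2p}<\infty$, i.e.\ $p>1/2$; for general $0<p<\infty$ replace $n^{-2}$ by $2^{-n/p}$ and strengthen the demand on $w(\delta_n)\delta_n^{-1/p}$ accordingly.) For (ii) your route is genuinely different: the paper disposes of it in two lines by quoting the main theorem of Alvarez Alonso \cite{AA81} on distribution functions attainable in Morrey spaces, so that any $g\in L_p\setminus M_{p,w}^{(*)}$ can be ``realized'' distributionally by some $f\in M_{p,w}$. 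Your equally-spaced-spikes construction is fully self-contained and elementary, at the price of the case analysis you outline; I checked that analysis and it closes for $p\ge 1$ (the local terms are controlled by $\alpha_k w(\ell_k)=1$ together with the monotonicity of $w(t)^p/t$, the global ones by $\sum_k \alpha_k^p m_k\lesssim\sum 2^{-kp}$), and your suggested strengthening of $r(m_k)$ does handle $p<1$.

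One small repair is needed in the write-up of (ii). You choose $\ell_k$ with $w(\ell_k)\le w(m_k)/k$ and set $\alpha_k=1/w(\ell_k)$, but then assert the \emph{identity} $\alpha_k^p m_k=k^p/r(m_k)^p$; with only the inequality you get $\alpha_k^p m_k\ge k^p/r(m_k)^p$, which is the wrong direction for summability. This is harmless here because the hypotheses of (ii) (that $w$ is non-decreasing while $w(t)t^{-1/p}$ is non-increasing) force $w$ to be continuous and positive on $(0,1]$ with $w(0^+)=0$, so $\ell_k$ with $w(\ell_k)=w(m_k)/k$ exactly does exist; alternatively, define $\alpha_k:=k/w(m_k)$ outright and only require $w(\ell_k)\le 1/\alpha_k$, which gives both $\alpha_k^p m_k=k^p/r(m_k)^p$ and $\alpha_k w(\ell_k)\le 1$, and the lower bound $w(m_k)\alpha_k=k$ still holds. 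Either fix makes the argument complete.
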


\begin{proof}
(i) Since $\lim_{t \rightarrow 0^+} w(t) t^{-1/p} = \infty$, there exists a sequence $\{t_k\} \subset (0, 1]$ such that 
$t_k \searrow 0, t_1 \leq 1/2$ and $w(t_k) t_k^{-1/p} \nearrow \infty$. Let us denote $v(t) = w(t) \, t^{-1/p}$ and 
$$
g(s): = \sum_{k=1}^{\infty} \Big( v(t_k)^{-p/2} - v(t_{k+1})^{-p/2} \Big)^{1/p} (t_k - t_{k+1})^{-1/p} \chi_{(t_{k+1}, t_k]}(s).
$$
Note that, by definition, ${\rm supp} \, g \subset [0, 1/2]$. Then, for every $k \in \mathbb N$
\begin{eqnarray*}
\int_0^{t_k} |g(s)|^p \, ds
&=&
\sum_{i=k}^{\infty} \int_{t_{i+1}}^{t_i} g(s)^p \, ds \\
&=& 
\sum_{i=k}^{\infty} \frac{v(t_i)^{-p/2} - v(t_{i+1})^{-p/2}}{t_i - t_{i+1}} (t_i - t_{i+1}) = v(t_k)^{-p/2}.
\end{eqnarray*}
In particular, we see that $g \in L_p$. Let $f(t):= g(t + \frac{1}{2})$ for $0 \leq t \leq 1$. Then $\| f \|_p = \| g \|_p$, and therefore
$f \in L_p$. Moreover, since ${\rm supp} f \subset [1/2, 1]$, we obtain $f \in K_{p, w}$. In fact, 
\begin{eqnarray*}
\| f \|_{K_{p, w}} 
&=& 
\sup_{0 < x \leq 1} w(x) \left( \frac{1}{x} \int_0^x |f(t)|^p \, dt \right)^{1/p} =
\sup_{\frac{1}{2} \leq x \leq 1} \frac{w(x)}{x^{1/p}} \Big( \int_{1/2}^x |f(t)|^p \, dt \Big)^{1/p} \\
&\approx&
\sup_{\frac{1}{2} \leq x \leq 1} \Big( \int_{1/2}^x |f(t)|^p \, dt \Big)^{1/p} = \| f \|_{L_p} < \infty.
\end{eqnarray*}
At the same time, if $I_k:= [\frac{1}{2}, t_k + \frac{1}{2}], k = 1, 2, \ldots$, we have
\begin{equation*}
w(|I_k|) \left( \frac{1}{|I_k|} \int_{I_k} |f(t)|^p \, dt \right)^{1/p} = v(t_k) \Big ( \int_0^{t_k} |g(s)|^p \, ds \Big)^{1/p} 
= v(t_k) \cdot v(t_k)^{-1/2} = v(t_k)^{1/2}.
\end{equation*}
Since $v(t_k)  \nearrow \infty$ as $k \rightarrow \infty$, we conclude that $f \not \in M_{p, w}$.

(ii) It is easy to find a function $g \in L_p \setminus M_{p, w}^{(*)}$. Next, by the main result of the paper \cite{AA81},  
there exist a function $f \in M_{p, w}$ and constants $c_0 > 0$ and $\lambda_0 > 0$ such that
$$
\Big| \{t \in [0, 1]: |f(t)| > \lambda \} \Big| \geq c \, \Big| \{t \in [0, 1]: |g(t)| > \lambda \} \Big|
$$
for all $\lambda \geq \lambda_0$. Clearly, since $g \not \in M_{p, w}^{(*)}$, from the last inequality it follows
that $f \not \in M_{p, w}^{(*)}$.
\end{proof}

The proof of Proposition \ref{Pro1} (ii) shows also that the Morrey space $M_{p, w}$ is not an r.i. space whenever 
$w(t) \, t^{-1/p}$ is a non-increasing function on $(0, 1]$ and $\lim_{t \rightarrow 0^+} w(t) = \lim_{t \rightarrow 0^+} \frac{t^{1/p}}{w(t)} = 0$.

For a normed ideal space $X = (X, \|\cdot\|)$ on $[0, 1]$ the {\it K{\"o}the dual} (or {\it associated space}) $X^{\prime}$ 
is the space of all real-valued Lebesgue measurable functions defined on $[0, 1]$ such that the {\it associated norm}
$$
\|f\|_{X^{\prime}}: = \sup_{g \in X, ~\|g\|_{X} \leq 1} \int_0^1 |f(x) g(x) | \, dx
$$
is finite. The K{\"o}the dual $X^{\prime}$ is a Banach ideal space. Moreover, $X  \stackrel {1}\hookrightarrow X^{\prime \prime}$ 
and we have equality $X = X^{\prime \prime}$ with $\|f\| = \|f\|_{X^{\prime \prime}}$ if and only if the norm in $X$ has the 
{\it Fatou property}, that is, if $0 \leq f_{n} \nearrow f$ a.e. on $[0, 1]$ and $\sup_{n \in {\bf N}} \|f_{n}\| < \infty$, then $f \in X$ 
and $\|f_{n}\| \nearrow \|f\|$.

Denote by $\cal D$ the set of all dyadic intervals $I_k^n = [(k-1) \, 2^{-n}, k \, 2^{-n}]$, where $n = 0, 1, 2, \ldots$
and $ k = 1, 2, \ldots, 2^n.$ If $f$ and $g$ are nonnegative functions (or quasi-norms), then the symbol $f \approx g$  means 
that $C^{-1}\, g \leq f \leq C\, g$ for some $C \geq 1$.
Moreover, we write $X \simeq Y$ if Banach spaces $X$ and $Y$ are isomorphic. 

The paper is organized as follows. After Introduction, in Section 2 the behaviour of Rademacher sums in Morrey spaces 
 is described (see Theorem 1). The main result of Section 3 is Theorem 2, which states that the Rademacher subspace
 ${\cal R}_p, 1 < p < \infty$, is complemented in the Morrey space $M_{p, w}$  if and only if ${\cal R}_p$ is isomorphic to $l_2$ 
 or equivalently if $\sup_{0 < t \leq 1} w(t) \log_2^{1/2} (2/t) < \infty$.
In the case when $p = 1$ situation is different, which is the contents of Section 4, where we are proving in Theorem 3 that the 
subspace ${\cal R}_1$ is not complemented in $M_{1, w}$ for any quasi-concave weight $w$.
Finally, in Section 5, the geometric structure of Rademacher subspaces in Morrey spaces is investigated (see Theorem 4).

%%%%%%%%%%%%%%%%%%%%%%%%% Section 2
\section{ Rademacher sums in Morrey spaces}

We start with the description of behaviour of Rademacher sums in the Morrey spaces $M_{p, w}$ defined by quasi-norms (\ref{2}), 
where $0 < p < \infty$ and $w$ is a non-decreasing function on $[0, 1]$ satisfying the doubling condition $w(2t) \leq C_0 \, w(t)$ for 
all $t \in (0, 1/2]$ with a certain $C_0 \geq 1$.
 
%%%%%%%%%%%%%%%%%%%%%%%%%%%%%%%%%%Thm 1
\begin{theorem} \label{Thm1} With constants depending only on $p$ and $w$
\begin{equation} \label{8}
\Big\| \sum_{k=1}^{\infty} a_k \, r_k  \Big\|_{M_{p, w}} \approx \| \{a_k\}_{k=1}^{\infty} \|_{l_2} + \sup_{m \in \mathbb N} \Big( w(2^{-m}) 
\sum_{k=1}^m | a_k | \Big). 
\end{equation}
\end{theorem}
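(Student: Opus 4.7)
The plan is to prove the two-sided estimate by analyzing the Morrey norm on dyadic intervals first and then extending to arbitrary intervals via the doubling condition on $w$.

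For the upper bound, I would fix $m \ge 0$ and a dyadic interval $I$ of length $2^{-m}$. The key observation is that on such $I$ the functions $r_1,\dots,r_m$ are constant, equal to some signs $\varepsilon_1,\dots,\varepsilon_m$, while the tail system $\{r_k\}_{k>m}$ restricted to $I$ is, after rescaling by the affine map $I\to[0,1]$, again a Rademacher sequence. Thus I split $R_n=\sum_{k=1}^m a_k\varepsilon_k+\sum_{k>m}a_k r_k$ on $I$, apply the triangle inequality, and use Khintchine's inequality \eqref{3} on the second piece to obtain
\[
\Bigl(\tfrac{1}{|I|}\int_I |R_n|^p\,dt\Bigr)^{1/p}\le \sum_{k=1}^m |a_k|+B_p\Bigl(\sum_{k>m} a_k^2\Bigr)^{1/2}.
\]
Multiplying by $w(2^{-m})\le 1$ and taking the supremum over $m$ and the dyadic intervals gives the upper estimate for the supremum restricted to $\mathcal{D}$. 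To pass from dyadic intervals to an arbitrary interval $I$ with $2^{-m-1}<|I|\le 2^{-m}$, I cover $I$ by at most two adjacent dyadic intervals of length $2^{-m}$, use $\int_I|R_n|^p\le\int_{I_1}|R_n|^p+\int_{I_2}|R_n|^p$, and invoke the monotonicity of $w$ together with the doubling condition $w(2t)\le C_0 w(t)$ to bound $w(|I|)$ by a constant multiple of $w(|I_i|)$. This costs only a constant factor depending on $p$ and $C_0$.

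For the lower bound, the $l_2$ term is immediate: taking $I=[0,1]$ inside the supremum in \eqref{2} and using $w(1)=1$ yields $\|R_n\|_{M_{p,w}}\ge\|R_n\|_{L_p}\ge A_p\|\{a_k\}\|_{l_2}$. For the second term, fix $m$ and choose signs $\varepsilon_k=\mathrm{sign}(a_k)$ for $k=1,\dots,m$; there is exactly one dyadic interval $I=I^m_j$ of length $2^{-m}$ on which $r_k\equiv\varepsilon_k$ for every $k\le m$. Since $\int_I r_k\,dt=0$ for each $k>m$, the mean of $R_n$ over $I$ equals $\sum_{k=1}^m|a_k|$, and so by Jensen's inequality
\[
\Bigl(\tfrac{1}{|I|}\int_I|R_n|^p\,dt\Bigr)^{1/p}\ge\tfrac{1}{|I|}\Bigl|\int_I R_n\,dt\Bigr|=\sum_{k=1}^m|a_k|.
\]
Multiplying by $w(2^{-m})$ and taking the supremum in $m$ finishes the lower bound; adding the two lower estimates and using $a+b\le 2\max(a,b)$ gives the claimed equivalence.

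The only nontrivial point is passing from dyadic to arbitrary intervals in the upper bound, which is where the doubling hypothesis on $w$ is used. Everything else is the interplay between the "local constancy" of the head $\sum_{k\le m}a_k r_k$ on $I^m_j$ and the "Rademacher-like independence" of the tail $\sum_{k>m}a_k r_k$ restricted to $I^m_j$, together with the classical Khintchine inequality \eqref{3}. I do not anticipate a real obstacle beyond keeping track of the constants.
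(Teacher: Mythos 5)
Your proposal is correct and follows essentially the same route as the paper: the same head--tail decomposition $\sum_{k\le m}a_k\varepsilon_k+\sum_{k>m}a_kr_k$ on dyadic intervals of length $2^{-m}$, Khintchine's inequality \eqref{3} for the tail, and the covering of an arbitrary interval by two adjacent dyadic intervals for the upper bound. The only (harmless) difference is in the lower bound for the weighted term, where you use Jensen's inequality together with the mean-zero property of the tail on $I$, while the paper obtains the same estimate $\bigl(\frac{1}{|I|}\int_I|f|^p\,dt\bigr)^{1/p}\ge\bigl|\sum_{k=1}^m a_k\varepsilon_k\bigr|$ by noting that $A+B$ and $A-B$ are equimeasurable on $I$ and applying Minkowski's inequality; both arguments require $p\ge1$, and like the paper you would need a separate remark for $0<p<1$.
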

%%%%%%%%%%%%%%%%%%%%%%%%%%%

%%%%%%%%%%%%%
\begin{proof} Firstly, let $ 1 \leq p < \infty$. Consider an arbitrary interval $I \in {\cal D}$, i.e., $I = I_k^m,$ with $m \in \mathbb N$ and 
$k = 1, 2, \ldots, 2^m$. Then, for every $f = \sum_{k=1}^{\infty} a_k \, r_k$, we have
$$
\Big(\int_I | f(t)|^p \, dt \Big)^{1/p} = \Big( \int_I \Big|\sum_{k=1}^m a_k \varepsilon_k + \sum_{k=m+1}^{\infty} a_k r_k(t) \Big|^p \, dt \Big)^{1/p},
$$
where $\varepsilon_k = {\rm sign} \, {r_k}_{\big| I}, k = 1, 2, \ldots, m$. Since the functions
$$
\sum_{k=1}^m a_k \varepsilon_k +\sum_{k=m+1}^{\infty} a_k r_k(t) ~~ {\rm and} 
~~ \sum_{k=1}^m a_k \varepsilon_k -\sum_{k=m+1}^{\infty} a_k r_k(t)
$$
are equimeasurable on the interval $I$, it follows that
\begin{eqnarray*}
\Big (\int_I | f(t)|^p \, dt \Big)^{1/p} 
&=& 
\frac{1}{2} \Big ( \int_I \Big|\sum_{k=1}^m a_k \varepsilon_k +\sum_{k=m+1}^{\infty} a_k r_k(t) \Big|^p \, dt \Big)^{1/p} \\
&+& 
\frac{1}{2} \Big ( \int_I \Big|\sum_{k=1}^m a_k \varepsilon_k -\sum_{k=m+1}^{\infty} a_k r_k(t) \Big|^p \, dt \Big)^{1/p},
\end{eqnarray*}
whence by the Minkowski triangle inequality we obtain
\begin{equation*}
\Big (\int_I | f(t)|^p \, dt \Big)^{1/p} \geq \Big ( \int_I \Big|\sum_{k=1}^m a_k \varepsilon_k \Big|^p \, dt \Big)^{1/p} \\
= 2^{-m/p}\, \Big|\sum_{k=1}^m a_k \varepsilon_k \Big|
\end{equation*}
for every $m = 1, 2, \ldots$. Clearly, one may find $i = 1, 2, \ldots, 2^m$ such that 
${r_k}_{\big | I_i^m} = {\rm sign} \, a_k$,  for all $k = 1, 2, \ldots, m$. Therefore, for every $m = 1, 2, \ldots$
$$
\left(\frac{1}{|I|} \int_I | f(t)|^p \, dt \right)^{1/p} \geq \sum_{k=1}^m |a_k|,
$$
and so
\begin{equation*}
\| f \|_{M_{p, w}} \geq \sup_{m \in \mathbb N} w(2^{-m}) \sum_{k=1}^m |a_k|.
\end{equation*}
On the other hand, by (\ref{7}) and (\ref{3}) we have
\begin{equation*}
\| f \|_{M_{p, w}} \geq \| f \|_{L_p} \geq A_p \, \| \{a_k\}_{k=1}^{\infty} \|_{l_2}.
\end{equation*}
Combining these inequalities, we obtain
\begin{equation*}
\| f \|_{M_{p, w}} \geq \frac{A_p}{2} \Big ( \| \{a_k\}_{k=1}^{\infty} \|_{l_2} +  \sup_{m \in \mathbb N} w(2^{-m}) \sum_{k=1}^m |a_k| \Big).
\end{equation*}

Let us prove the reverse inequality. For a given interval $I \subset [0, 1]$ we can find two adjacent dyadic intervals $I_1$ and $I_2$ 
of the same length such that
%%%%%%%%%%%%%%%%% 9
\begin{equation} \label{9}
I \subset I_1 \cup I_2 ~~ {\rm and} ~~ \frac{1}{2}\, |I_1| \leq |I| \leq 2\, |I_1|.
\end{equation}
If $| I_1| = | I_2| = 2^{-m}$, then by the Minkowski triangle inequality and inequality in (\ref{3}) we have
\begin{eqnarray*}
\Big (\int_{I_1} | f (t)|^p \,dt \Big)^{1/p} 
&=&
\Big ( \int_{I_1} \Big|\sum_{k=1}^m a_k \varepsilon_k + \sum_{k=m+1}^{\infty} a_k r_k(t) \Big|^p \, dt \Big )^{1/p}\\
&\leq&
\Big( \int_{I_1} \Big|\sum_{k=1}^m a_k \varepsilon_k \Big|^p \, dt \Big )^{1/p} + \Big ( \int_{I_1} \Big|\sum_{k=m+1}^{\infty} a_k r_k(t) \Big|^p \, dt \Big )^{1/p}\\
&\leq&
2^{-m/p} \, \sum_{k=1}^m | a_k| + 2^{-m/p} \, \Big ( \int_0^1 \Big| \sum_{k=m+1}^{\infty} a_k r_{k-m}(t) \Big|^p \, dt \Big )^{1/p}\\
&\leq&
2^{-m/p} \, \sum_{k=1}^m | a_k| + 2^{-m/p} \, B_p \,  \| \{a_k\}_{k=1}^{\infty} \|_{l_2}.
\end{eqnarray*}
The same estimate holds also for the integral $\big (\int_{I_2} | f (t)|^p \,dt \big)^{1/p}$. Therefore, by (\ref{9}),
\begin{eqnarray*}
\Big (\frac{1}{| I |} \int_{I} | f (t)|^p \,dt \Big)^{1/p} 
&\leq& 
2^{1/p} \, \Big (\frac{1}{| I_1 |} \int_{I_1} | f (t)|^p \,dt + \frac{1}{| I_2 |} \int_{I_2} | f (t)|^p \,dt \Big)^{1/p}\\
&\leq&
4^{1/p} \, B_p \, \Big( \sum_{k=1}^m | a_k| + \| \{a_k\}_{k=1}^{\infty} \|_{l_2} \Big)
\end{eqnarray*}
and
\begin{eqnarray*}
w(| I |) \, \Big (\frac{1}{| I |} \int_{I} | f (t)|^p \,dt \Big)^{1/p} 
&\leq&
w(2 \cdot 2^{-m}) \, 4^{1/p} \, B_p \, \Big( \sum_{k=1}^m | a_k| + \| \{a_k\}_{k=1}^{\infty} \|_{l_2} \Big)\\
&\leq&
C_0 \cdot 4^{1/p} \, B_p \, w(2^{-m})  \, \Big( \sum_{k=1}^m | a_k| + \| \{a_k\}_{k=1}^{\infty} \|_{l_2} \Big).
\end{eqnarray*}
Hence, using definition of the norm in $M_{p, w}$, we obtain
\begin{equation*}
\| f \|_{M_{p, w}} \leq
C_0 \cdot 4^{1/p} \, B_p \, \Big( \sup_{m \in \mathbb N} w(2^{-m}) \, \sum_{k=1}^m | a_k| + \| \{a_k\}_{k=1}^{\infty} \|_{l_2} \Big).
\end{equation*}
The same proof works also in the case when $0 < p < 1$ with the only change that the $L_p$-triangle inequality contains 
constant $2^{1/p-1}$. 
\end{proof}
%%%%%%%%%%%%%%%  

In the rest of the paper, a weight function $w$ is assumed to be {\it quasi-concave on $[0, 1]$}, that is, $w(0) = 0, w$ is non-decreasing, 
and ${w(t)}/{t}$ is non-increasing on $(0, 1]$. Moreover, as above, we assume that $w(1) = 1$.

Recall that a basic sequence $\{x_k\}$ in a Banach space $X$ is called {\it subsymmetric} if it is unconditional and is equivalent in $X$ to any its subsequence.

%%%%%%%%%%%%%%%% Corollary 1
\begin{corollary} \label{Cor1}
For every $1 \leq p < \infty$ $\{r_k\}$ is an unconditional and not subsymmetric basic sequence in $M_{p, w}$.
\end{corollary}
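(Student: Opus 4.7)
Both claims will follow directly from the norm equivalence (8) of Theorem 1, applied twice.

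For unconditionality, the right-hand side of (8) depends on the coefficients only through their absolute values $|a_k|$; hence for any signs $\varepsilon_k \in \{-1,+1\}$ one has
$$\Big\|\sum_k \varepsilon_k a_k r_k\Big\|_{M_{p,w}} \approx \Big\|\sum_k a_k r_k\Big\|_{M_{p,w}}$$
with equivalence constants depending only on $p$ and $w$, which gives unconditionality of $\{r_k\}$.

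To show that $\{r_k\}$ is not subsymmetric, I would produce a single subsequence $\{r_{n_j}\}$ that is inequivalent to $\{r_k\}$. Writing $\sum_j a_j r_{n_j} = \sum_l b_l r_l$ with $b_{n_j} = a_j$ and $b_l = 0$ otherwise, and applying (8) to $\{b_l\}$, yields the companion estimate
$$\Big\|\sum_j a_j r_{n_j}\Big\|_{M_{p,w}} \approx \|\{a_j\}\|_{l_2} + \sup_{m \in \mathbb{N}} w(2^{-m}) \sum_{j:\, n_j \leq m} |a_j|.$$
Since $w$ is quasi-concave with $w(0^+) = 0$, I would select $\{n_j\}$ so rapidly increasing that $w(2^{-n_j}) \leq j^{-2}$ for every $j$. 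On the test coefficients $a_j = 1$ for $j = 1,\dots,n$, the supremum on the right above is then uniformly bounded by $1$ (on each interval $[n_j,n_{j+1})$ the count is $j$ while $w(2^{-m}) \leq w(2^{-n_j}) \leq j^{-2}$), so $\big\|\sum_{j=1}^n r_{n_j}\big\|_{M_{p,w}} \approx \sqrt n$.

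On the other hand, (8) applied directly to $a_k = 1$ for $k \leq n$ gives $\|R_n\|_{M_{p,w}} \approx \sqrt n + \sup_{m \leq n} w(2^{-m}) m$ (the values $m > n$ do not matter, as there $n\,w(2^{-m}) \leq n\,w(2^{-n})$ is already absorbed). An equivalence of $\{r_k\}$ and $\{r_{n_j}\}$ would therefore force $\sup_{m \leq n} w(2^{-m}) m \leq C \sqrt n$ uniformly in $n$, i.e.\ $w(2^{-m}) \sqrt m$ bounded --- exactly the borderline condition announced in the abstract under which $\{r_k\}$ spans an isomorphic copy of $l_2$ in $M_{p,w}$. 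Outside this regime the equivalence fails and $\{r_k\}$ is not subsymmetric. The main technical point to pin down is the sparse construction of $\{n_j\}$: its existence rests on the fact that $w(t) \to 0$ as $t\to 0^+$, and one must verify that the resulting supremum over \emph{all} $m\in\mathbb N$ (not only $m=n_j$) stays bounded.
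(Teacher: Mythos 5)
The paper offers no separate proof of this corollary --- it is meant to be read off from formula (8) --- so your proposal can only be judged against what that formula actually yields. The unconditionality half is exactly the intended one-line deduction: the right-hand side of (8) depends on the coefficients only through $|a_k|$, and the same observation shows the partial-sum projections are uniformly bounded, so $\{r_k\}$ is indeed an unconditional basic sequence. No issue there, and your computations for the subsequence test ($\|\sum_{j\le n} r_{n_j}\|\approx\sqrt n$ for $n_j$ sparse enough that $w(2^{-n_j})\le j^{-2}$, versus $\|\sum_{k\le n} r_k\|\approx\sqrt n+\max_{m\le n}w(2^{-m})\,m$) all check out.

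The gap is in the scope of the non-subsymmetry argument, and you half-notice it yourself without drawing the conclusion. Your construction needs $w(0^+)=0$ to choose the $n_j$ (the paper's definition of quasi-concavity does not force this; if $w(0^+)>0$ then $M_{p,w}=L_\infty$ and $\{r_k\}$ is equivalent to the unit vector basis of $l_1$, which \emph{is} subsymmetric), and it disproves equivalence of $\{r_k\}$ with $\{r_{n_j}\}$ only when $\sup_m w(2^{-m})\sqrt m=\infty$, i.e.\ when condition (10) fails. This restriction cannot be removed by a cleverer choice of subsequence or coefficients: when (10) holds, Corollary 2 of the paper gives $\|\sum a_kr_k\|_{M_{p,w}}\approx\|a\|_{l_2}$, so $\{r_k\}$ is equivalent to the unit vector basis of $l_2$ and is therefore subsymmetric. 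In other words, the statement as printed carries an implicit non-degeneracy assumption (namely $w(0^+)=0$ and the failure of (10)), and what your argument actually establishes is the sharp dichotomy: $\{r_k\}$ is subsymmetric in $M_{p,w}$ precisely when $M_{p,w}=L_\infty$ or (10) holds, and fails to be subsymmetric otherwise. You should state this explicitly rather than leaving the regime where (10) holds unaddressed; with that caveat your proof is complete and is the natural one.
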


%%%%%%%%%%%%%%%% Corollary 2
\begin{corollary} \label{Cor2}
Let $1 \leq p < \infty$. The Rademacher functions span $l_2$ space in $M_{p, w}$ if and only if 
\begin{equation} \label{10}
 \sup_{0 < t \leq 1} w(t) \log_2^{1/2} (2/t) < \infty.
\end{equation} 
\end{corollary}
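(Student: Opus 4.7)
The plan is to derive this directly from Theorem~\ref{Thm1}, which gives the equivalence
\[
\Big\| \sum_{k=1}^{\infty} a_k r_k \Big\|_{M_{p,w}} \approx \|\{a_k\}\|_{l_2} + \sup_{m \in \mathbb N} w(2^{-m}) \sum_{k=1}^m |a_k|.
\]
Since the inequality $\|\{a_k\}\|_{l_2} \lesssim \|\sum a_k r_k\|_{M_{p,w}}$ is automatic from the right side, the Rademacher functions span $l_2$ in $M_{p,w}$ if and only if the second term on the right is dominated by $\|\{a_k\}\|_{l_2}$, i.e.\ there exists $C > 0$ such that
\[
\sup_{m \in \mathbb N} w(2^{-m}) \sum_{k=1}^m |a_k| \leq C\, \|\{a_k\}\|_{l_2} \quad \text{for all } (a_k) \in l_2.
\]

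Next I would show that this uniform inequality is equivalent to the discrete condition $\sup_{m \in \mathbb N} w(2^{-m}) \sqrt{m} < \infty$. The sufficiency is a one-line application of the Cauchy--Schwarz inequality $\sum_{k=1}^m |a_k| \leq \sqrt{m}\, \|\{a_k\}\|_{l_2}$. The necessity comes from the canonical test sequence $a_1 = \cdots = a_m = 1$, $a_k = 0$ for $k > m$, which turns the inequality into $w(2^{-m}) \, m \leq C \sqrt{m}$.

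Finally, I would translate the discrete condition into the continuous one stated in (\ref{10}). Since $w$ is quasi-concave, $w(2t) \leq 2w(t)$, so for $t \in (2^{-m-1}, 2^{-m}]$ we have $w(t) \approx w(2^{-m})$, and at the same time $\log_2(2/t) \approx m$ for such $t$. Taking the supremum over $m \in \mathbb N$ on the discrete side is therefore comparable to taking it over $t \in (0,1]$ on the continuous side, which yields precisely (\ref{10}).

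No step here is really an obstacle: once Theorem~\ref{Thm1} is in hand the only point requiring a little care is the last comparison, and this is a routine consequence of quasi-concavity. The whole argument is essentially Cauchy--Schwarz, the extremal choice $a_k = 1$ for $k \leq m$, and a doubling-type dyadic-to-continuous passage for $w$.
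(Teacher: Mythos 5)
Your proposal is correct and follows essentially the same route as the paper: both reduce to Theorem~\ref{Thm1}, use the Cauchy--Schwarz (H\"older--Rogers) inequality for sufficiency, and use the test vectors $a_1=\dots=a_m$ constant (the paper normalizes them to $a_i=m^{-1/2}$) for necessity, with quasi-concavity of $w$ handling the dyadic-to-continuous passage. No substantive difference.
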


\begin{proof} If (\ref{10}) holds, then for all $m \in \mathbb N$ we have $w(2^{-m}) \, m^{1/2} \leq C$. Using the 
H\"older-Rogers inequality, we obtain
$$
w(2^{-m}) \, \sum_{k=1}^m |a_k| \leq w(2^{-m})\Big(\sum_{k=1}^m |a_k|^2\Big)^{1/2} m^{1/2} \le C \, \Big(\sum_{k=1}^m |a_k|^2\Big)^{1/2}.
$$
Therefore, from (\ref{8}) it follows that $\| \sum_{k=1}^{\infty} a_k \, r_k \|_{M_{p, w}} \approx \| \{a_k\}\|_{l_2}$. 

Conversely, suppose that condition (\ref{10}) does not hold. Then, by the quasi-concavity of $w$, there exists a sequence 
of natural numbers $m_k \rightarrow \infty$ such that
\begin{equation}\label{11}
w(2^{-m_k}) \, m_k^{1/2}  \rightarrow \infty ~{\rm as} ~ k  \rightarrow \infty.
\end{equation}
Consider the Rademacher sums $R_k(t) = \sum_{i=1}^k a_i^k \, r_i(t)$ corresponding to the sequences
of coefficients $a^k = (a_i^k)_{i=1}^{m_k}$, where $a_i^k = m_k^{-1/2}, ~1 \leq i \leq m_k$. We have $\| a^k\|_{l_2} = 1$ for all
$k = 1, 2, ... $ However, $\sum_{i=1}^{m_k} a_i^{k} = m_k^{1/2} ~(k = 1, 2, ...)$, which together with (\ref{11}) and (\ref{8}) 
imply that $\| R_k\|_{M_{p, w}} \rightarrow \infty$ as $k \rightarrow \infty$.
\end{proof}

%%%%%%%%%%%% Remark 1
\begin{remark} \label{rem1}
{\rm The Rademacher functions span $l_2$ in each of the spaces $M_{p, w}^{(*)}, M_{p, w}$ and $K_{p, w},$ $1 \leq p < \infty$
(see embeddings (\ref{7})). In fact, the Orlicz space $L_M$ generated by the function $M(u) = e^{u^2} -1$ coincides with the Marcinkiewicz 
space $M_{1, v}^{(*)}$ with $v(t) = \log_2^{-1/2}(2/t)$ (cf. \cite[Lemma 3.2]{As09}). Recalling that $G$ is the closure of $L_{\infty}$ 
in $M_{1, v}^{(*)}$ we note that the embedding  $G \subset M_{p, w}^{(*)}$ holds if and only if (\ref{10}) is satisfied. Therefore, 
by already mentioned Rodin-Semenov theorem (cf. \cite{RS75}; see also \cite[Theorem 2.b.4]{LT}), the Rademacher 
functions span $l_2$ in $M_{p, w}^{(*)}$ if and only if (\ref{10}) holds. 

Moreover, it is instructive to compare the behaviour of Rademacher sums in the spaces $M_{1,w}^{(*)}, M_{1,w}$ and
$K_{1, w}$ in the case when $w(t) = \log_2^{-1/q} (2/t)$, where $q > 2$. Then (\ref{10}) does not hold and 
$$
\Big\|\sum_{k=1}^{\infty} a_k r_k \Big\|_{M_{1, w}^{(*)}} \approx \| \{a_k\}_{k=1}^{\infty} \|_{l_2} + \sup_{m \in \mathbb N} \, m^{-1/q} \, 
\sum_{k=1}^m a_k^{\ast},
$$
where $\{a_k^{\ast}\}$ is the non-increasing rearrangement of $\{|a_k|\}_{k=1}^{\infty}$ (cf. Rodin-Semenov \cite[p. 221]{RS75} and Pisier \cite{Pi81}; see also Marcus-Pisier \cite[pp. 277-278]{MP84}),
$$
\Big\| \sum_{k=1}^{\infty} a_k r_k \Big\|_{M_{1, w}} \approx \| \{a_k\}_{k=1}^{\infty} \|_{l_2} + \sup_{m \in \mathbb N} \, m^{-1/q} \, 
\sum_{k=1}^m |a_k | ~~~ {\rm by ~~ (\ref{8})}, ~ {\rm and}
$$
$$
\Big\| \sum_{k=1}^{\infty} a_k r_k \Big\|_{K_{1, w}} \approx \| \{a_k\}_{k=1}^{\infty} \|_{l_2} + \sup_{m \in \mathbb N} \, m^{-1/q} \, \Big| \sum_{k=1}^ma_k \Big| ~~  {\rm (cf. ~ \cite[Theorem ~ 2]{AM10}).}
$$
}
\end{remark}

Now, we pass to studying the problem of complementability of the closed linear span ${\cal R}_p: = [r_n]_{n=1}^\infty$ in the space 
$M_{p, w}$. 
Since the results turn out to be different for $p > 1$ and $p = 1$, we consider these cases separately.

%%%%%%%%%%%%%%%%%%%%%%%%% Section 3
\section{  Complementability of Rademacher subspaces \newline
\hspace{9mm} in Morrey spaces $M_{p, w}$ for $p > 1$}

%%%%%%%%%%%%%%%%%%%%%%%% Theorem 2
\begin{theorem} \label{Thm2}
Let $1<p<\infty$. The subspace ${\cal R}_p$ is complemented in the Morrey space $M_{p, w}$ if and only if condition (\ref{10}) holds.
\end{theorem}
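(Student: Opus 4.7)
My plan splits the equivalence into its two directions.

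\textbf{Sufficiency: (\ref{10}) implies complementability.} Assuming (\ref{10}), Corollary \ref{Cor2} gives $\mathcal{R}_p \simeq \ell_2$ inside $M_{p,w}$. I would take the natural Rademacher projection $Pf := \sum_{k=1}^\infty \bigl(\int_0^1 f(t) r_k(t)\,dt\bigr)\, r_k$ and show it is bounded on $M_{p,w}$. For $f \in M_{p,w}$, the embedding (\ref{7}) gives $f \in L_p$, and the classical $L_p$-boundedness of $P$ for $1<p<\infty$ (via Khintchine and duality) yields $\bigl(\sum_k a_k^2\bigr)^{1/2} \leq C_p \|f\|_{L_p} \leq C_p \|f\|_{M_{p,w}}$, where $a_k := \int f r_k$. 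Corollary \ref{Cor2} then gives $\|Pf\|_{M_{p,w}} \approx \bigl(\sum_k a_k^2\bigr)^{1/2}$, so $P : M_{p,w} \to \mathcal{R}_p$ is bounded; since $Pr_k = r_k$, it is a projection onto $\mathcal{R}_p$.

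\textbf{Necessity: complementability implies (\ref{10}).} I argue by contradiction: suppose (\ref{10}) fails while some bounded projection $Q : M_{p,w} \to \mathcal{R}_p$ exists. As in the proof of Corollary \ref{Cor2}, pick $m_k \to \infty$ with $w(2^{-m_k}) m_k^{1/2} \to \infty$. The crucial test functions are $f_k := \mathrm{sign}\bigl(\sum_{j=1}^{m_k} r_j\bigr) \in L_\infty \subset M_{p,w}$, which satisfy $\|f_k\|_{M_{p,w}} \leq \|f_k\|_{L_\infty} = 1$; by exchangeability of $r_1,\dots,r_{m_k}$ a direct computation shows $\int_0^1 f_k(t) r_j(t)\,dt$ is of order $m_k^{-1/2}$ for $j \leq m_k$ and vanishes for $j > m_k$. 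If the projection were the natural $P$ from the sufficiency step, Theorem \ref{Thm1} would give $\|Pf_k\|_{M_{p,w}} \gtrsim w(2^{-m_k}) \cdot m_k \cdot m_k^{-1/2} = w(2^{-m_k}) m_k^{1/2} \to \infty$, contradicting $\|Q\| < \infty$.

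To cover an arbitrary $Q$, write $Qf = \sum c_k(f) r_k$ with $c_k \in M_{p,w}^*$ satisfying $c_k(r_j) = \delta_{jk}$; the $c_k$ need not equal $\int\cdot\, r_k$ but differ only by elements of $\mathcal{R}_p^\perp \subset M_{p,w}^*$. I would remove this ambiguity by averaging over the dyadic sign-flip group $G_N = \{\pm 1\}^N$. Let $\phi_\varepsilon$ be the measure-preserving involution of $[0,1]$ characterized by $r_j \circ \phi_\varepsilon = \varepsilon_j r_j$ for $j \leq N$, and set $U_\varepsilon f := f \circ \phi_\varepsilon$. Forming $\tilde Q := \lim_N |G_N|^{-1} \sum_{\varepsilon \in G_N} U_\varepsilon\, Q\, U_\varepsilon$ yields (if bounded) a projection that commutes with all finite sign-flips of Rademachers, and after iterating with similar averaging over permutations of $\{1,\dots,N\}$ must coincide with $P$; the test-function contradiction above then applies.

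The principal obstacle is that each $U_\varepsilon$ is an isometry of $L_p$ but its $M_{p,w}$-operator norm can blow up with $N$: pulling a generic interval through $\phi_\varepsilon$ may split it into many short dyadic pieces, giving bounds that deteriorate like $1/w(2^{-N})$. Consequently $\tilde Q$ cannot be bounded term-by-term, and the required uniform bound must be extracted from the cancellation between the inner and outer $U_\varepsilon$. A plausible route is to exploit that on the image $\mathcal{R}_p$ of $Q$, the outer $U_\varepsilon$ acts as the isometric sign-flip $\sum a_j r_j \mapsto \sum \varepsilon_j a_j r_j$ (for $j \leq N$, by Theorem \ref{Thm1}), reducing the problem to a uniform-in-$\varepsilon$ estimate of $\|Q U_\varepsilon f\|_{M_{p,w}}$, which in turn should follow from a dyadic decomposition of $f$ combined with Theorem \ref{Thm1}.
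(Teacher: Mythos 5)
Your sufficiency argument is exactly the paper's: under (\ref{10}) Corollary \ref{Cor2} gives $\mathcal{R}_p\simeq \ell_2$, and the $L_p$-boundedness of the orthogonal projection $P$ transfers to $M_{p,w}$ through the embedding (\ref{7}). That half is fine.

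The necessity half is where the proposal both diverges from the paper and stops short of a proof. The paper does not reduce an arbitrary projection to $P$ at all for $p>1$: it shows (Proposition \ref{Pro2}) that when (\ref{10}) fails, $\mathcal{R}_p$ contains a complemented copy of $c_0$ built from blocks $v_k=\sum_{i=n_{k-1}+1}^{n_k}a_i r_i$ with small $\ell_2$-part and norm carried by the weighted-$\ell_1$ term in (\ref{8}); composing projections would then put a complemented $c_0$ inside $M_{p,w}$, which is impossible because $M_{p,w}$ is a dual space (of a block space) and Bessaga--Pe{\l}czy\'nski forbids complemented $c_0$ in conjugate spaces. Your route --- average $Q$ over the sign-flip group to force $Q=P$, then kill $P$ with the test functions $f_k=\mathrm{sign}(\sum_{j\le m_k}r_j)$ --- is viable in principle (the test-function computation is correct: $\int f_k r_j\,dt\approx m_k^{-1/2}$ and Theorem \ref{Thm1} gives $\|Pf_k\|_{M_{p,w}}\gtrsim w(2^{-m_k})\sqrt{m_k}\to\infty$), but you leave the crucial step as ``a plausible route \dots should follow,'' and that step is precisely where the work lies. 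Moreover, your diagnosis of the obstacle is off: the operators $U_\varepsilon f=f\circ\phi_\varepsilon$ do \emph{not} blow up on $M_{p,w}$. The map $\phi_\varepsilon$ (equivalently $t\mapsto t\oplus u$) carries every dyadic interval onto a dyadic interval of the same length, so $U_\varepsilon$ is an isometry of the dyadic space $M_{p,w}^d$; and the paper's Lemma \ref{Lem1} shows $\|f\|_{M_{p,w}^d}\le\|f\|_{M_{p,w}}\le 4\|f\|_{M_{p,w}^d}$, whence $\|U_\varepsilon\|_{M_{p,w}\to M_{p,w}}\le 4$ uniformly in $\varepsilon$ and $N$. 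With that in hand the averaging (Rudin's theorem over the compact group $([0,1],\oplus)$, no permutation averaging needed) identifies any bounded projection onto $\mathcal{R}_p$ with $P$ --- this is exactly the paper's Proposition \ref{Pro3}, which the authors prove for all $1\le p<\infty$ but deploy only in the $p=1$ case. So your plan can be completed, but the completion requires the dyadic-equivalence lemma you did not supply, and without it the necessity direction is not proved.
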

%%%%%%%%%%%%%%%%%%%%%%%%%%%

To prove this theorem we will need the following auxiliary assertion.

%%%%%%%%%%%%%%%%%%%%%%%% Proposition 2
\begin{proposition} \label{Pro2}
If condition (\ref{10}) does not hold, then the subspace ${\cal R}_p$ contains a complemented 
(in ${\cal R}_p$) subspace isomorphic to $c_0$.
\end{proposition}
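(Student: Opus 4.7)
The idea is to exhibit a sequence of block-sums of Rademacher functions which is equivalent to the unit vector basis of $c_0$ in the $M_{p, w}$-norm and admits a bounded averaging projection from ${\cal R}_p$ onto its closed linear span.

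\emph{Selecting the sequence.} Using that (\ref{10}) fails and $w$ is quasi-concave, I first extract a rapidly sparse subsequence $(n_k) \subset \mathbb{N}$ satisfying simultaneously: (a) $w(2^{-n_k})\, n_k^{1/2} \geq 2^k$; (b) $n_{k+1} \geq 2 n_k$; (c) the local slow-variation bound $w(2^{-\lfloor n_k/2 \rfloor}) \leq 4\, w(2^{-n_k})$; and (d) $w(2^{-n_{k+1}}) \leq \tfrac{1}{2}\, w(2^{-n_k})$. The crucial property is (c), which I establish by contradiction: if $w(2^{-n/2})/w(2^{-n}) > 4$ held whenever $w(2^{-n})\, n^{1/2}$ is large, then iterating this inequality through successive halvings $n, \lfloor n/2 \rfloor, \lfloor n/4 \rfloor,\ldots$ would yield the exponential drop $w(2^{-n}) \leq w(2^{-1})/n^2$ and hence $w(2^{-n})\, n^{1/2} \leq 1/n^{3/2} \to 0$, contradicting the failure of (\ref{10}). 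With (c) available along a subsequence, conditions (a), (b) and (d) are then arranged by passing to a further subsequence.

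\emph{The $c_0$-basis.} Let $B_k = (\lfloor n_k/2 \rfloor,\, n_k] \cap \mathbb{N}$, set $m_k := \# B_k$, and put
$$
f_k = \frac{1}{w(2^{-n_k})\, m_k} \sum_{i \in B_k} r_i.
$$
By Theorem \ref{Thm1}, the $l_2$-contribution to $\|f_k\|_{M_{p, w}}$ equals $1/(w(2^{-n_k})\, m_k^{1/2}) \lesssim 2^{-k}$ by (a), whereas the supremum-contribution attains the value $1$ at $m = n_k$ and, by (c), stays bounded by $4$ on $B_k$; hence $\|f_k\|_{M_{p, w}} \approx 1$. To compute $\|\sum_k b_k f_k\|_{M_{p, w}}$ via Theorem \ref{Thm1}, the $l_2$-contribution is geometrically summable by (a); for the supremum-contribution at an index $m$ lying in block $B_{k_0}$ or the subsequent gap, the partial sum $\sum_{i \leq m} |a_i|$ splits into the sum over earlier completed blocks (which decays geometrically in $k_0 - k$ by (d)) and a partial current-block term (bounded by $4|b_{k_0}|$ via (c)). This gives $\|\sum_k b_k f_k\|_{M_{p, w}} \lesssim \sup_k |b_k|$, while the matching lower bound is immediate from the unconditionality of $(r_k)$ (Corollary \ref{Cor1}) together with $\|f_k\|_{M_{p, w}} \approx 1$.

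\emph{Complementation.} Define
$$
P\Big(\sum_i a_i r_i\Big) = \sum_k \Big(\frac{1}{m_k}\sum_{i \in B_k} a_i\Big) \sum_{i \in B_k} r_i,
$$
which sends ${\cal R}_p$ into $\overline{\mathrm{span}}\{f_k\}$ and satisfies $P f_j = f_j$. Its boundedness on ${\cal R}_p$ in the $M_{p, w}$-norm follows again from Theorem \ref{Thm1}: the $l_2$-component of $Pf$ is dominated by that of $f$ by Cauchy--Schwarz applied blockwise, and the supremum-component at an index $m$ in block $B_{k_0}$ splits into a completed-blocks piece, bounded by $\|f\|_{M_{p, w}}$ by Theorem \ref{Thm1} applied at the index $n_{k_0-1}$, plus a current-block piece, bounded by $4\|f\|_{M_{p, w}}$ using (c). The main obstacle in the whole proof is realising the selection of Step 1 with property (c) intact; once that subsequence is available, the remainder is a bookkeeping computation via Theorem \ref{Thm1}.
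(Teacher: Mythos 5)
Your construction is correct, and it reaches the paper's conclusion by a partly different route. For the $c_0$-part you take the ``top-half'' blocks $(\lfloor n_k/2\rfloor, n_k]$ and must therefore pay for them with the extra flatness property (c); its proof by iterated halving is the one genuinely delicate point of your Step 1, but it does close: if $w(2^{-\lfloor n/2\rfloor}) > 4\,w(2^{-n})$ then $w(2^{-\lfloor n/2\rfloor})\,\lfloor n/2\rfloor^{1/2} \geq 2\,w(2^{-n})\,n^{1/2}$, so the largeness hypothesis propagates down the whole chain of halvings and the resulting bound $w(2^{-n}) \leq C n^{-2}$ contradicts $\limsup_n w(2^{-n})\sqrt{n}=\infty$. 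The paper instead uses consecutive blocks $(n_{k-1}, n_k]$ with $n_k$ chosen \emph{minimally} subject to $w(2^{-n_k})\sqrt{n_k-n_{k-1}}\geq 2^k$; minimality yields the analogue of your (c) for free, namely $w(2^{-i})\sqrt{i-n_{k-1}} < 2^{k+1}$ for every $i$ in the block, so no separate selection lemma is needed. The more substantial divergence is in the complementation step: the paper merely notes that $E\simeq c_0$ sits inside the separable space ${\cal R}_p$ and invokes Sobczyk's theorem, whereas you construct an explicit averaging projection and check its boundedness via Theorem \ref{Thm1} --- and it is precisely here that your condition (c) becomes indispensable, to control the current-block piece $w(2^{-m})\, m_{k_0}|\bar a_{k_0}|$ for $m$ lying inside $B_{k_0}$. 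Your route is longer but constructive (it exhibits the projection); the paper's is shorter but non-constructive. One caveat shared by both arguments: your condition (d), like the paper's (\ref{16}), tacitly assumes $w(2^{-n})\to 0$; in the degenerate case $\lim_{t\to 0^+}w(t)>0$ one has $M_{p,w}=L_\infty$ and ${\cal R}_p\simeq l_1$, so this assumption is implicit in the statement and is not a defect of your proof relative to the paper's.
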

%%%%%%%%%%%%%%%%%%%%%%%%%%%

\begin{proof}
Since $w$ is quasi-concave, by the assumption, we have
\begin{equation} \label{12}
\limsup_{n \rightarrow \infty} w(2^{-n}) \sqrt{n} = \infty.
\end{equation}
We select an increasing sequence of positive integers as follows. Let $n_1$ be the least positive integer satisfying 
the inequality $w(2^{-{n_1}}) \sqrt{n_1} \geq 2$. As it is easy to see $w(2^{-{n_1}}) \sqrt{n_1} < 2^2$. By induction, assume that 
the numbers $n_1 < n_2 < \ldots < n_{k-1}$ are chosen. Applying (\ref{12}), we take for $n_k$ the least positive 
integer such that
%%%%%%%%%%%%%%% 13
\begin{equation} \label{13}
w(2^{-n_k}) \sqrt{n_k - n_{k-1}} \geq 2^k.
\end{equation}
Then, obviously,
%%%%%%%%%%%%%%%% 14
\begin{equation} \label{14}
w(2^{-n_k}) \sqrt{n_k - n_{k-1}} < 2^{k+1}.
\end{equation}
Thus, we obtain a sequence $0 = n_0 < n_1 < \ldots$ satisfying inequalities (\ref{13}) and (\ref{14}) for 
all $k \in \mathbb N$.
Let us consider the block basis $\{v_k\}_{k=1}^{\infty}$ of the Rademacher system defined as follows:
\begin{equation*}
v_k = \sum_{i = n_{k-1}+1}^{n_k} a_i\, r_i, ~~ {\rm where} ~~ a_i = \dfrac{1}{(n_k - n_{k-1}) \, w(2^{-n_k})} ~~ {\rm for} ~~ n_{k-1} < i \leq n_k.
\end{equation*}
Let us recall that, by Theorem \ref{Thm1}, if $R = \sum_{k=1}^{\infty} b_k \, r_k$, then 
$\| R \|_{M_{p, w}} \approx \| R \|_{l_2} + \| R \|_{w}$, where 
$$
\| R \|_{l_2} = \Big(\sum_{k=1}^{\infty} b_k^2\Big)^{1/2} ~~ {\rm and} ~~ 
\| R \|_{w} = \sup_{m \in \mathbb N} \Big( w(2^{-m}) \sum_{k=1}^m |b_k|\Big).
$$
Now, we estimate the norm of $v_k,$ $k=1,2,\dots,$ in $M_{p, w}$. At first, by (\ref{13}), 
%%%%%%%%%%%%%%%% 15
\begin{equation} \label{15}
\| v_k \|_{l_2} = \Big( \sum_{i=n_{k-1}+1}^{n_k} a_i^2 \Big)^{1/2} =
\dfrac{1}{\sqrt{n_k - n_{k-1}} \, w(2^{-n_k})} \leq 2^{-k}, ~ k = 1, 2, \ldots  
\end{equation} 
Moreover, taking into account (\ref{13}), (\ref{14}) and 
the choice of $n_k$, for every $k \in \mathbb N$ and $n_{k-1} < i \leq n_k$ we have
\begin{equation*}
w(2^{-i})\, \sum_{j= n_{k-1} + 1}^i a_j = \dfrac{w(2^{-i}) (i-n_{k-1})}{(n_k - n_{k-1}) w(2^{-n_k})} \leq \dfrac{2^{k+1} 
\sqrt{i-n_{k-1}}}{2^k \sqrt{n_k - n_{k-1}}} \leq 2.
\end{equation*}
Therefore, $\| v_k \|_{w} \leq 2$ for $k \in \mathbb N$ and combining this with (\ref{15}) we obtain $\| v_k \|_{M_{p, w}} \leq C$ for 
$k \in \mathbb N$.

On the other hand, by Theorem \ref{Thm1}, 
\begin{equation} \label{15extra}
\| v_k \|_{M_{p, w}} \geq c \, w(2^{-n_k}) \, \sum_{ i=n_{k-1}+1}^{n_k} a_i = c 
\end{equation}
for some constant $c > 0$ and every $k \in \mathbb N$. Thus, $\{v_k\}_{k=1}^{\infty}$ is a semi-normalized block basis of 
$\{r_k\}_{k=1}^{\infty}$ in $M_{p, w}$.

Further, let us select a subsequence $\{m_i\} \subset \{n_k\}$ such that 
%%%%%%%%%%%%%%%% 16
\begin{equation} \label{16}
w(2^{-m_{i+1}}) \leq \frac{1}{2} w(2^{-m_i}), ~~ i = 1, 2, \ldots
\end{equation} 
and denote by $\{u_i\}_{i=1}^{\infty}$ the corresponding subsequence of $\{v_k\}_{k=1}^{\infty}$. Then, $u_i$ can be represented 
as follows:
$$
u_i = \sum_{k=l_i}^{m_i} a_k \, r_k, ~~ {\rm where} ~~ l_i = n_{j_i - 1} + 1, ~m_i = n_{j_i}, ~ j_1 < j_2 < \ldots .
$$
Moreover, from the above $\{u_i\}$ is a semi-normalized sequence in $M_{p, w}$ and 
%%%%%%%%%%%%%%%%%%%%%% 17
\begin{equation} \label {17}
\| u_i \|_{l_2} \leq 2^{-i} ~~ {\rm for} ~~  i = 1, 2, \ldots.
\end{equation}
We show that the sequence $\{u_i\}_{i=1}^{\infty}$ is equivalent in $M_{p, w}$ to the unit vector basis of $c_0$. 

Let $f = \sum_{i=1}^{\infty} \beta_i \, u_i, \beta_i \in \mathbb R$. Then, we have
$$
f = \sum_{i=1}^{\infty} \beta_i \sum_{k=l_i}^{m_i} a_k \, r_k = \sum_{k=1}^{\infty} \gamma_k \, r_k, 
$$
where $\gamma_k = \beta_i \, a_k, l_i \leq k \leq m_i, i = 1, 2, \ldots$ and $\gamma_k = 0$ if $k \not \in \cup_{i = 1}^{\infty} [l_i, m_i]$.
To estimate $\| f \|_{w}$, assume, at first, that $m_s \leq q < l_{s+1}$ for some $s \in \mathbb N$. Then, 
\begin{equation*}
\sum_{k=1}^q |\gamma_k| = \sum_{i=1}^s |\beta_i|  \sum_{k=l_i}^{m_i} a_k = \sum_{i=1}^s |\beta_i| \, \dfrac{1}{w(2^{-m_i})} \leq \| (\beta_i) \|_{c_0} \, \sum_{i=1}^s  \dfrac{1}{w(2^{-m_i})}, 
\end{equation*}
and from (\ref{16}) it follows that
\begin{equation*}
w(2^{-q}) \, \sum_{k=1}^q |\gamma_k| \leq \| (\beta_i) \|_{c_0} \, \sum_{i=1}^s  \dfrac{w(2^{-m_s})}{w(2^{-m_i})} \leq 
\| (\beta_i) \|_{c_0} \, \sum_{i=0}^{\infty} 2^{-i} = 2\, \| (\beta_i) \|_{c_0}.
\end{equation*}
Otherwise, we have $l_s \leq q < m_s, s \in \mathbb N$. Then, similarly,
\begin{eqnarray*}
\sum_{k=1}^q |\gamma_k| 
&\leq& 
\left(  \sum_{i=1}^{s-1} \dfrac{1}{w(2^{-m_i})} +  \sum_{k = l_s}^q a_k \right) \,  \| (\beta_i) \|_{c_0} \\
&=&
\left(  \sum_{i=1}^{s-1} \dfrac{1}{w(2^{-m_i})} + \dfrac{q - l_s +1}{(m_s - l_s +1) \, w(2^{-m_s})} \right) \,  \| (\beta_i) \|_{c_0}. 
\end{eqnarray*}
Since $m_s = n_{j_s}$ and $l_s = n_{j_s-1} + 1$ for some $j_s \in \mathbb N$, in view of (\ref{13}), (\ref{16}) and the choice of 
$n_{j_s}$, we obtain
\begin{eqnarray*}
w(2^{-q}) \, \sum_{k=1}^q |\gamma_k| 
&\leq& 
\left(  \sum_{i=1}^{s-1} \dfrac{w(2^{-m_{s-1}})}{w(2^{-m_i})} + \dfrac{w(2^{-q}) (q - l_s + 1)}{(m_s - l_s + 1) \, w(2^{-m_s})} \right) \,  \| (\beta_i) \|_{c_0} \\
&\leq&
\left(  \sum_{i=0}^{\infty} 2^{-i} + \dfrac{2^{j_s+1} \sqrt{q - l_s + 1}}{ 2^{j_s} \sqrt{m_s - l_s + 1}} \right) \,  \| (\beta_i) \|_{c_0} \leq 4  \,  \| (\beta_i) \|_{c_0}.
\end{eqnarray*}
Combining this with the previous estimate, we obtain that $\| f \|_w \leq 4  \,  \| (\beta_i) \|_{c_0}$. On the other hand, from (\ref{17}) it 
follows that $\| f \|_{l_2} \leq \| (\beta_i) \|_{c_0}$. Therefore, again by Theorem \ref{Thm1},
$$
\| f \|_{M_{p, w}} \leq C \, \big( \| f \|_{l_2} + \| f \|_w \big) \leq 5 \, C \, \| (\beta_i) \|_{c_0}.
$$
In opposite direction, taking into account the fact that $\{u_i\}$ is an unconditional sequence in $M_{p, w}$, by \eqref{15extra},
we obtain
$$
\| f \|_{M_{p, w}} \geq c'\,\sup_{i \in \mathbb N} |\beta_i| \, \| u_i \|_{M_{p, w}} \geq c^{\prime}c \, \| (\beta_i) \|_{c_0},
$$
for some constant $c^{\prime} > 0$. Thus, we have proved that $E: = [u_n]_{M_{p, w}} \simeq c_0$. Since ${\cal R}_p$ is separable, 
Sobczyk's theorem (see, for example, \cite[Corollary 2.5.9]{AK06}) implies that $E$ is a complemented subspace in ${\cal R}_p$.
\end{proof}

%%%%%%%%%%%%%%%%%%%%%% Proof Thm 2
\proof[Proof of Theorem \ref{Thm2}] At first, let us assume that relation (\ref{10}) holds. Then, by Corollary \ref{Cor2}, 
${\cal R}_p \simeq l_2$. Therefore, since $M_{p, w} \stackrel {1}\hookrightarrow L_p$, by the Khintchine inequality, the orthogonal 
projection $P$ generated by the Rademacher system satisfies the following:
$$
\| Pf \|_{M_{p, w}} \approx \| Pf \|_{L_p} \leq \| P\|_{L_p \rightarrow L_p} \| f \|_{L_p} \leq \| P\|_{L_p \rightarrow L_p} \| f \|_{M_{p, w}}, 
$$
because $P$ is bounded in $L_p, 1 < p < \infty$. Hence, $P: M_{p, w} \rightarrow M_{p, w}$ is bounded.

Conversely, we argue in a similar way as in the proof of Theorem 4 in \cite{ALM11}. Suppose that the subspace 
${\cal R}_p = [r_n]_{n=1}^\infty$ is complemented in $M_{p, w}$ and let $P_1: M_{p, w} \rightarrow M_{p, w}$ be a 
bounded linear projection whose range is ${\cal R}_p$. By Proposition \ref{Pro2}, there is a subspace $E$ complemented 
in ${\cal R}_p$ and such that $E \simeq c_0$. Let $P_2: {\cal R}_p \rightarrow E$ be a bounded linear projection. Then 
$P: = P_2 \circ P_1$ is a linear projection bounded in $M_{p, w}$ whose image coincides with $E$. Thus, $M_{p, w}$
contains a complemented subspace $E \simeq c_0$. 

Since $M_{p, w}$ is a conjugate space (more precisely, $M_{p, w} = (H^{q, u})^{\ast}$, where $H^{q, u}$ is the ``block space" 
and $1/p + 1/q = 1$ -- see, for example, \cite[Proposition 5]{Zo86}; see also \cite{BRV99} and \cite{Na11}), this contradicts 
the well-known result due to Bessaga-Pe{\l}czy\'nski saying that arbitrary conjugate space cannot contain a complemented 
subspace isomorphic to $c_0$ (see \cite[Corollary 4]{BP58} and \cite[Theorem~4 and its proof]{BP58S}). This contradiction proves the theorem.
\endproof

%%%%%%%%%%%%%%%%%%%%%%%%%
%%%%%%%%%%%%%%%%%%%%%%%%%%%%%%%% Section 4
\section{Rademacher subspace ${\cal R}_1$ is not complemented in Morrey space $M_{1, w}$}

%%%%%%%%%%%%%%%%%%%%%%%%%%%%%%%%%%Theorem 3
\begin{theorem} \label{Thm3}
For every quasi-concave weight $w$ the subspace ${\cal R}_1$ is not complemented in the Morrey space $M_{1, w}$.
\end{theorem}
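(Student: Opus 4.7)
The plan is a proof by contradiction: assume that $P\colon M_{1,w}\to {\cal R}_1$ is a bounded linear projection. I would first observe that Proposition 2 extends verbatim to $p=1$, since its proof uses only Theorem 1 (valid for $1\le p<\infty$), the unconditionality of the Rademacher sequence in $M_{p,w}$ (Corollary 1), and Sobczyk's theorem. Consequently, if (10) fails then ${\cal R}_1$ contains a subspace $E\simeq c_0$ complemented in ${\cal R}_1$, and composing with $P$ yields a bounded projection $M_{1,w}\to E\simeq c_0$; if (10) holds then ${\cal R}_1\simeq l_2$ by Corollary 2. Thus Theorem 3 reduces to ruling out a complemented copy of $c_0$ in $M_{1,w}$ in the first case and a complemented copy of $l_2$ in the second.

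Because the dual-space argument of Theorem 2 (Bessaga--Pe{\l}czy\'nski) is unavailable here -- for example, $L_1 = M_{1,w}$ with $w(t)=t$ is not a dual space -- I would pursue a duality-and-transfer argument from $M_{1,w}$ into $L_1$, exploiting $M_{1,w}\stackrel{1}{\hookrightarrow} L_1$. Writing $Pf = \sum_k \phi_k(f)\,r_k$ with $\phi_k\in M_{1,w}^*$ biorthogonal to $(r_k)$, Theorem 1 applied to the image of $P$ forces the bound
\[
\|(\phi_k(f))_{k=1}^\infty\|_{l_2} \le C\,\|f\|_{M_{1,w}}.
\]
Restricting to $f\in L_\infty\subset M_{1,w}$ and using that the K\"othe dual satisfies $M_{1,w}'\subset L_1$, one can represent each $\phi_k$ (modulo its singular part) by an $L_1$-density $g_k$ with $\int g_k r_j = \delta_{jk}$. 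The system $(g_k)$ would then induce a bounded projection $L_1\to {\cal R}_1$ in the case (10) holds, or a complemented copy of $c_0$ in $L_1$ in the case (10) fails; both conclusions contradict classical facts for $L_1$ (non-complementation of the Rademacher subspace $\simeq l_2$, via the Dunford--Pettis property of $L_1$, respectively weak sequential completeness of $L_1$ ruling out complemented $c_0$).

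The main obstacle is the careful handling of the singular part of $M_{1,w}^*$: since $M_{1,w}$ typically lacks order-continuous norm (as in Proposition 1), the $\phi_k$ need not decompose as pure $L_1$-densities, so one must argue that the singular components do not obstruct the transfer to $L_1$. An alternative route, avoiding this technicality, is to construct directly an explicit sequence $(f_n)\subset M_{1,w}$ of bounded norm whose Rademacher-projection coefficients $\phi_k(f_n)$ violate the $l_2$-estimate forced by Theorem 1; here $(f_n)$ should be chosen so that the interval-supremum defining $\|\cdot\|_{M_{1,w}}$ is attained at scales matched to the dyadic structure of the Rademacher functions, and the lower bound in the Khintchine inequality $\|R_n\|_{L_1}\ge A_1 \|(a_k)\|_{l_2}$ is exploited to produce incompatible $L_1$- and $M_{1,w}$-side estimates for $Pf_n$.
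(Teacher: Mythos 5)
Your reduction to the two cases (condition (10) holds / fails) matches the paper's organization, and your observation that Proposition \ref{Pro2} extends to $p=1$ is correct. But both of your closing arguments have genuine gaps. In the case where (10) fails, you end up needing to show that $M_{1,w}$ contains no complemented copy of $c_0$, and your proposed transfer to $L_1$ does not deliver this: a complemented $c_0$ inside $M_{1,w}$ induces nothing in $L_1$, since the inclusion $M_{1,w}\hookrightarrow L_1$ is not onto and the projection lives on the smaller space. The paper instead uses a much simpler and decisive observation that you missed: the right-hand side of (\ref{8}) does not depend on $p$, so $\|Qf\|_{M_{p,w}}\approx\|Qf\|_{M_{1,w}}\le\|Q\|\,\|f\|_{M_{1,w}}\le\|Q\|\,\|f\|_{M_{p,w}}$ for any projection $Q$ onto ${\cal R}_1$ and any $p>1$; hence $Q$ would also be a bounded projection of $M_{p,w}$ onto ${\cal R}_p$, contradicting Theorem \ref{Thm2} (where the dual-space argument \emph{is} available). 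No new $c_0$-obstruction for $M_{1,w}$ is needed.

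In the case where (10) holds, your primary route fails for a structural reason: from $\|(\phi_k(f))\|_{l_2}\le C\|f\|_{M_{1,w}}$ you cannot conclude $\|(\int fg_k)\|_{l_2}\le C\|f\|_{L_1}$, because $\|\cdot\|_{M_{1,w}}$ \emph{dominates} $\|\cdot\|_{L_1}$ — boundedness of a Rademacher projection on the small space $M_{1,w}$ (which sits near $L_\infty$ when $w$ decays slowly) is a strictly weaker hypothesis than boundedness on $L_1$, so no projection on $L_1$ is induced and the Dunford--Pettis argument never gets off the ground. Your ``alternative route'' is the correct idea and is what the paper actually does, but as written it is only a statement of intent; the substance is missing. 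Concretely, the paper first reduces to the dyadic norm (Lemma \ref{Lem1}), uses a Rudin-type averaging over the dyadic group to replace an arbitrary projection by the orthogonal one $P$ (Proposition \ref{Pro3}) — which also disposes of your worry about singular functionals — and then, via the K\"othe-dual criterion of Lemma \ref{Lem2}, reduces everything to showing $\limsup_n n^{-1/2}\|\sum_{k=1}^n r_k\|_{(M_{1,w}^d)'}=\infty$. This is established by the explicit test functions $f_m=\chi_{E_m}/w(|E_m|)$ with $E_m=\{t: 0\le\sum_{k=1}^{2m}r_k(t)\le\sqrt{m/2}\}$, whose $M_{1,w}^d$-norm is at most $1$ by quasi-concavity, together with a binomial/Stirling computation giving $\int_0^1|\sum_{k=1}^{2m}r_k|f_m\,dt\gtrsim\sqrt{m}/w(|E_m|)$ and the fact that $w(|E_m|)\to0$. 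Without this (or an equivalent) construction, the proof is not complete.
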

%%%%%%%%%%%%%%%%%%%%%%%%%%%

In the proof we consider two cases separately, depending if the condition (\ref{10}) is satisfied or not. 

\begin{proof}[Proof of Theorem \ref{Thm3}: the case when (\ref{10}) does not hold] 
%At first, let (\ref{10}) do not hold. 
On the contrary, we suppose that ${\cal R}_1$ is complemented in $M_{1, w}$. Then, if $Q$ is a bounded 
linear projection from $M_{1, w}$ onto ${\cal R}_1$, by Theorem \ref{Thm1}, for every $p \in (1, \infty)$ and $f \in M_{p, w}$, we have 
\begin{equation*}
\| Q f \|_{M_{p, w}} \approx \| Q f \|_{M_{1, w}} \leq \| Q\| \, \| f \|_{M_{1, w}} \leq \| Q \| \, \| f \|_{M_{p, w}}.
\end{equation*}
Thus, $Q$ is a bounded projection from $M_{p, w}$ onto ${\cal R}_p$, which contradicts Theorem \ref{Thm2}.
\end{proof}

To prove the assertion in the case when (\ref{10}) holds, we will need auxiliary results.
\noindent
Let $M_{p, w}^d$ be the dyadic version of the space $M_{p, w}$, $1 \leq p < \infty$, consisting of all measurable functions 
$f: [0, 1] \rightarrow {\mathbb R}$ such that
%%%%%%%%%%%%%%%%%%%%% 6 
\begin{equation*}
\|f\|_{M_{p, w}^d} = \sup_{I \in {\cal D}} ~ w(|I|) \left( \frac{1}{|I|}\int_I | f(t) |^p\,dt \right)^{1/p} < \infty.
\end{equation*}

%%%%%%%%%%%%%%%%% Lemma 1
\begin{lemma} \label{Lem1}
For every $1 \leq p < \infty$ $M_{p, w} = M_{p, w}^d$ and
\begin{equation} \label{18}
\|f\|_{M_{p, w}^d} \leq \|f\|_{M_{p, w}} \leq 4 \, \|f\|_{M_{p, w}^d}.
\end{equation}
\end{lemma}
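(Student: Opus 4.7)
The plan is to prove the two inequalities in \eqref{18} separately. The left inequality is immediate: since $\mathcal{D}$ is a subfamily of all subintervals of $[0,1]$, the supremum defining $\|f\|_{M_{p,w}^d}$ is taken over a smaller family than the supremum defining $\|f\|_{M_{p,w}}$.

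For the right inequality, I will reuse the covering idea already employed in the proof of Theorem~\ref{Thm1}. Given an arbitrary interval $I \subset [0,1]$, I will choose the unique integer $m \geq 0$ with $2^{-m-1} \leq |I| \leq 2^{-m}$, and then select two adjacent dyadic intervals $I_1, I_2 \in \mathcal{D}$ with $|I_1| = |I_2| = 2^{-m}$ such that $I \subset I_1 \cup I_2$. By construction, both $|I| \leq 2^{-m}$ (so $w(|I|) \leq w(2^{-m})$ by monotonicity of $w$) and $2^{-m} \leq 2|I|$.

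The core estimate is then straightforward. Applying the definition of $\|f\|_{M_{p,w}^d}$ to each $I_j \in \mathcal{D}$ gives
\begin{equation*}
\int_{I_j} |f(t)|^p\,dt \;\leq\; \frac{|I_j|}{w(|I_j|)^p}\,\|f\|_{M_{p,w}^d}^p \;=\; \frac{2^{-m}}{w(2^{-m})^p}\,\|f\|_{M_{p,w}^d}^p,
\end{equation*}
so that
\begin{equation*}
w(|I|)^p \cdot \frac{1}{|I|}\int_I |f(t)|^p\,dt \;\leq\; w(2^{-m})^p \cdot \frac{2 \cdot 2^{-m}}{|I|} \cdot \frac{\|f\|_{M_{p,w}^d}^p}{w(2^{-m})^p} \;\leq\; 4\,\|f\|_{M_{p,w}^d}^p,
\end{equation*}
using $2^{-m}/|I| \leq 2$. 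Taking $p$-th roots yields $w(|I|)\bigl(\frac{1}{|I|}\int_I|f|^p\,dt\bigr)^{1/p} \leq 4^{1/p}\,\|f\|_{M_{p,w}^d} \leq 4\,\|f\|_{M_{p,w}^d}$ for every $p \geq 1$, and taking the supremum over $I$ finishes the proof.

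There is really no obstacle here — the argument is essentially a bookkeeping lemma saying that the Morrey norm is equivalent to its dyadic version, and the constant $4$ is already dictated by the factor $2$ losses in both the covering step (two dyadic intervals instead of one) and in the ratio $2^{-m}/|I| \leq 2$. Notably, monotonicity of $w$ alone suffices; no quasi-concavity or doubling condition is needed for the inequality \eqref{18} to hold with the constant $4$.
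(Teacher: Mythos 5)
Your proof is correct, and it follows the same basic strategy as the paper's: cover an arbitrary interval $I$ by two adjacent dyadic intervals and compare the averages. The difference lies in the bookkeeping for the weight. The paper normalizes the covering by $\tfrac12|I_1|\le|I|\le 2|I_1|$, so $|I|$ may exceed $|I_1|$, and it therefore estimates $\frac{w(|I|)}{|I|}\le\frac{w(\frac12|I_1|)}{\frac12|I_1|}\le\frac{2w(|I_1|)}{|I_1|}$, which genuinely uses the quasi-concavity of $w$ (that $w(t)/t$ is non-increasing) in the first step. You instead fix the dyadic scale from above, $2^{-m-1}\le|I|\le 2^{-m}=|I_1|$, so that $w(|I|)\le w(2^{-m})$ follows from monotonicity alone, and you absorb the resulting loss into the elementary bound $2^{-m}/|I|\le 2$. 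Your observation that monotonicity of $w$ suffices (no quasi-concavity or doubling needed) is therefore accurate and is a mild weakening of the hypotheses compared with the paper's argument; as a small bonus your computation yields the constant $4^{1/p}\le 4$, slightly sharper than the paper's $4$ when $p>1$. The only cosmetic point is the boundary case $m=0$ (where $[0,1]$ has no adjacent dyadic neighbour), but there $I\subset I_1=[0,1]$ and the estimate only improves, so nothing is lost.
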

%%%%%%%%%%%%%%%%
\begin{proof}
The left-hand side inequality in (\ref{18}) is obvious. To prove the right-hand side one, we observe that for any interval 
$I \subset [0, 1]$ we can find adjacent dyadic intervals $I_1$ and $I_2$ of the same length such that 
$I \subset I_1 \cup I_2$ and $\frac{1}{2} |I_1| \leq | I | \leq 2 |I_1|$. Then, by the quasi-concavity of $w$,
\begin{eqnarray*}
&& w(| I |) \left( \frac{1}{| I |} \int_{I} |f(t)|^p \, dt \right)^{1/p} =
\frac{w(| I |)}{| I |} \Big( | I |^{p-1} \int_{I} |f(t)|^p \, dt \Big)^{1/p} \\
&\leq&
\frac{w(\frac{1}{2} |I_1|)}{\frac{1}{2}|I_1|} \Big[ 2^{p-1} |I_1|^{p-1} \Big( \int_{I_1} |f(t)|^p \, dt + \int_{I_2} |f(t)|^p \, dt \Big) \Big]^{1/p}\\
&\leq&
2^{2-1/p} \, w(|I_1|) \left( \frac{1}{|I_1|} \int_{I_1} |f(t)|^p \, dt + \frac{1}{|I_2|} \int_{I_2} |f(t)|^p \, dt \right)^{1/p} \\
&\leq&
4 \, \sup_{I \in {\cal D}} w(|I|) \left( \frac{1}{|I|}  \int_{I} |f(t)|^p \, dt \right)^{1/p} = 4 \, \|f\|_{M_{p, w}^d}.
\end{eqnarray*}
Taking the supremum over all intervals $I \subset [0, 1]$, we obtain the right-hand side inequality in (\ref{18}).
\end{proof}

Let $P$ be the orthogonal projection generated by the Rademacher sequence, i.e., 
\begin{equation*} \label{19}
Pf(t): = \sum_{k=1}^{\infty} \int_0^1 f(s) r_k(s) \, ds \cdot r_k(t).
\end{equation*}

%%%%%%%%%%%%%%%%% Proposition 3
\begin{proposition} \label{Pro3}
Let $1 \leq p < \infty$. If ${\cal R}_p$ is a complemented subspace in $M_{p, w}$, then the projection $P$ is bounded in 
$M_{p, w}$.
\end{proposition}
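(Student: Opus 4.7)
The plan is to produce the orthogonal projection $P$ by symmetrizing the given bounded projection $Q$ over the finite group of sign-flips of the first $n$ Rademacher functions, and then pass to a limit as $n\to\infty$. For each $n\in\mathbb N$ and each $\varepsilon=(\varepsilon_1,\dots,\varepsilon_n)\in\{-1,+1\}^n$, let $\tau_\varepsilon\colon[0,1]\to[0,1]$ be the measure-preserving involution which flips the first $n$ binary digits of $t$ according to $\varepsilon$, and set $T_\varepsilon f=f\circ\tau_\varepsilon$. These operators satisfy the crucial identities $T_\varepsilon r_k=\varepsilon_k r_k$ for $k\le n$ and $T_\varepsilon r_k=r_k$ for $k>n$. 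Defining
\[
P_n f:=\frac{1}{2^n}\sum_{\varepsilon\in\{-1,+1\}^n}T_\varepsilon\,Q\,T_\varepsilon f,
\]
a direct computation shows $P_n r_k=r_k$ for every $k$, so each $P_n$ is a projection of $M_{p,w}$ onto ${\cal R}_p$.

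Next I would verify that $P_n f\to P f$ coefficient-wise as $n\to\infty$. Writing $Q^*r_k=g_k$ for the density representing the functional $c_k(\cdot)=\int Q(\cdot)\,r_k$, and expanding $g_k$ in the Walsh basis $\{W_A\}$, the relation $T_\varepsilon W_A=\bigl(\prod_{i\in A\cap[n]}\varepsilon_i\bigr)W_A$ yields an explicit formula showing that the $k$-th Rademacher coefficient of $P_n f$ (for $k\le n$) equals $\sum_{A:\,A\cap[n]=\{k\}}\widehat{g_k}(A)\widehat f(A)$. Since $c_k(r_k)=1$ forces $\widehat{g_k}(\{k\})=1$, only the atom $A=\{k\}$ survives in this sum as $n\to\infty$, giving the limit $\widehat f(\{k\})=\int f\,r_k$, which is the $k$-th coefficient of $Pf$.

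The decisive step, and the main obstacle, is to establish a uniform bound $\sup_n\|P_n\|_{M_{p,w}\to M_{p,w}}<\infty$; once in hand, a Fatou-type passage to the limit (using that $P_n f\in{\cal R}_p$ and Theorem~\ref{Thm1} to recognize convergence on coefficients) completes the proof. The difficulty is that the individual operators $T_\varepsilon$ are \emph{not} uniformly bounded on $M_{p,w}$: since $\tau_\varepsilon$ scatters dyadic intervals of length exceeding $2^{-n}$, the best trivial estimate is $\|T_\varepsilon\|\lesssim 1/w(2^{-n})\to\infty$, so the proof must extract substantial cancellation from the signed average. Because $P_n f\in{\cal R}_p$, Theorem~\ref{Thm1} reduces the task to uniformly bounding $\bigl(\sum_k|b_k^{(n)}|^2\bigr)^{1/2}+\sup_m w(2^{-m})\sum_{k=1}^m|b_k^{(n)}|$ by $C\|f\|_{M_{p,w}}$, where $b_k^{(n)}$ denotes the $k$-th Rademacher coefficient of $P_n f$. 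The $l_2$ piece should follow from the Khintchine inequality combined with the embedding $M_{p,w}\hookrightarrow L_p$. The weighted-supremum piece is the delicate one: there one has to transfer the bound $\sup_m w(2^{-m})\sum_{k=1}^m|c_k(f)|\le C\|f\|_{M_{p,w}}$ (available from the hypothesis on $Q$ together with Theorem~\ref{Thm1} applied to $Qf\in{\cal R}_p$) over to the orthogonal coefficients $d_k(f)=\int f\,r_k$ through the averaging, and this is the technical heart of the argument.
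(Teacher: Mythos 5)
Your overall strategy---manufacturing the orthogonal projection $P$ by averaging the given projection $Q$ against measure-preserving ``digit-flip'' transformations---is exactly the idea behind the paper's proof. But your argument has a genuine gap, and moreover the step you single out as the ``decisive obstacle'' rests on a false premise. You claim that the best trivial estimate for $\|T_\varepsilon\|$ on $M_{p,w}$ is of order $1/w(2^{-n})$ because $\tau_\varepsilon$ ``scatters'' dyadic intervals of length exceeding $2^{-n}$. It does not: flipping any fixed set of binary digits (equivalently, the map $t\mapsto t\oplus u$ for $u$ dyadic) carries every dyadic interval $I_k^m$ \emph{onto} another dyadic interval $I_{k'}^m$ of the same length, for every level $m$ simultaneously. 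Hence each $T_\varepsilon$ is an isometry of the dyadic space $M_{p,w}^d$, and by Lemma~\ref{Lem1} (the two-sided equivalence $\|f\|_{M_{p,w}^d}\le\|f\|_{M_{p,w}}\le 4\|f\|_{M_{p,w}^d}$) one gets $\|T_\varepsilon\|_{M_{p,w}\to M_{p,w}}\le 4$ uniformly in $n$ and $\varepsilon$, whence $\sup_n\|P_n\|\le 16\|Q\|$ with no cancellation needed. Since you neither prove the uniform bound nor notice that it is immediate once one passes to the dyadic norm, the proof as written is incomplete precisely at the point you yourself identify as decisive.

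There is a second, quieter gap in your limiting argument: you write $Q^*r_k=g_k$ for ``the density representing the functional'' $c_k$, and then expand $g_k$ in the Walsh basis. But $(M_{p,w})^*$ need not be a K\"othe function space (it generally contains singular functionals), so the coordinate functionals $Q_k$ of $Q$ have no a priori integral representation, and your Walsh-coefficient computation of the limit of the $k$-th coefficient of $P_nf$ is not justified. The paper sidesteps both issues at once: it passes to $M_{p,w}^d$ via Lemma~\ref{Lem1}, observes that the full family $T_u f=f\circ(\cdot\oplus u)$, $u\in[0,1]$, acts isometrically there and leaves ${\cal R}_p$ invariant, invokes Rudin's averaging theorem over the compact abelian group $([0,1],\oplus)$ to obtain a bounded projection onto ${\cal R}_p$ commuting with every $T_u$, and then identifies that projection with $P$ by the exact identity $\int_{U_i}T_uf\,du-\int_{U_i^c}T_uf\,du=\int_0^1 f r_i\,ds\cdot r_i$ --- no $n\to\infty$ limit, no uniform bound on a sequence of operators, and no representation of dual functionals by densities is required. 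If you want to keep your finite-average scheme, you must (i) insert the dyadic-norm lemma to make the $T_\varepsilon$ uniformly bounded, and (ii) replace the density argument by a duality/weak-compactness argument to pass from $P_n$ to $P$.
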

%%%%%%%%%%%
\begin{proof}
By Lemma \ref{Lem1}, it is sufficient to prove the same assertion for the dyadic space $M_{p, w}^d$. We almost repeat 
the arguments from the proof of the similar result for r.i. function spaces (see \cite{RS79} or \cite[Theorem 3.4]{As09}).

Let $t=\sum\limits_{i=1}^{\infty} \alpha_{i} 2^{-i}$ and $u=\sum\limits_{i=1}^{\infty} \beta_{i} 2^{-i}$ $(\alpha_{i},
\beta_{i}=0,1)$ be the binary expansion of the numbers $t,u \in [0,1]$.
Define the following operation:
$$
t\oplus u=\sum_{i=1}^{\infty} 2^{-i} [(\alpha_{i} + \beta_{i} ) ~ {\rm mod} ~ 2]. 
$$
One can easily verify that this operation transforms the segment $[0, 1]$ into a compact Abelian group.
For every $u\in [0,1]$, the transformation $w_{u}(s)=s \oplus u$ preserves the Lebesgue measure on $[0, 1]$, 
i.e., for any measurable $E\subset [0,1]$, its inverse image $w_{u}^{-1}(E)$ is measurable and $m(w_{u}^{-1}(E))=m(E)$. 
Moreover, $w_{u}$ maps any dyadic interval onto some dyadic interval. Hence, the operators 
$T_{u}f = f\circ w_{u}$ $(0\le u \le 1)$ act isometrically in $M_{p, w}^d$. From the definition of the Rademacher functions 
it follows that the subspace ${\cal R}_p$ is invariant with respect to these operators. Therefore,
by the Rudin theorem (see \cite[Theorem 5.18, pp. 134-135]{Ru91}), there exists a bounded linear projector $Q$ acting from $M_{p, w}^d$
onto ${\cal R}_p$ and commuting with all operators $T_{u} \;\; (0\le u \le 1)$. We show that $Q = P$.

First of all, the projector $Q$ has the representation
\begin{equation} \label{20}
Q f(t)= \sum\limits_{i=1}^{\infty} Q_{i}(f) \, r_{i}(t),
\end{equation}
where by Theorem \ref{Thm1}, $Q_{i} ~ (i = 1, 2, \ldots)$ are linear bounded functionals on $M_{p, w}^d$. It is obvious that
\begin{equation} \label{21}
Q_{i}(r_{j})=
\left\{
 \begin{array}{rcl}
 1 ~{\rm if} ~ i=j,\\
 0 ~ {\rm if} ~ i\not=j.\\
 \end{array}
   \right.
 \end{equation}
Consider the sets
$$
U_i =\Big\{ u\in [0,1]: u=\sum_{j=1}^{\infty} \alpha_{j}2^{-j}, \;\; \alpha_{i}=0\Big\}, ~~ U_i^c = [0,1]\backslash U_i.
$$ 
One can check that
$$
r_{i}(t\oplus u)=
\left\{
 \begin{array}{rcl}
 r_{i}(t) ~{\rm if} ~ u\in U_i,\\
 -r_{i}(t) ~~ {\rm if} ~ u \in U_i^c.  \\
\end{array}
\right.
$$
Due to the relation $T_{u}Q=QT_{u}$ $(0\le u \le 1)$ this implies
$$
Q_{i}(T_uf)=
\left\{
 \begin{array}{rcl}
 Q_{i}(f) ~{\rm if} ~ u\in U_i,\\
 -Q_{i}(f) ~~{\rm if} ~ u\in U_i^c.  \\
 \end{array}
 \right.
 $$
Taking into account that $m(U_i)= m(U_i^c)= 1/2$, we find that
$$
 \int_{U_i} Q_{i}(T_uf)\, du = \frac{1}{2} Q_{i}(f) ~~{\rm and} ~~  \int_{U_i^c} Q_{i}(T_uf)\, du = - \frac{1}{2} Q_{i}(f).
$$
Thanks to the boundedness of $Q_{i}$ , this functional can be moved outside the integral; therefore, we obtain
\begin{equation}\label{22} 
Q_{i}(f) = Q_i \Bigl ( \int_{U_i} T_uf\,du - \int _{U_i^c}T_uf\,du \Bigl).
\end{equation}
Since
$$ 
\{ s\in [0,1]:\, s = t\oplus u, \;\; u\in U_i \}= 
\left\{
 \begin{array}{rcl}
 U_i ~ {\rm if} ~ t\in U_i,\\
 U_i^c ~ {\rm if} ~ t \in U_i^c,  \\
\end{array}
\right.
$$
$$ 
\{ s\in [0,1]:\, s=t\oplus u, \;\; u\in U_i^c \}=
\left\{
 \begin{array}{rcl}
U_i^c ~ {\rm if} ~ t\in U_i,\\
U_i  ~ {\rm if} ~ t \in U_i^c,  \\
\end{array}
\right.
$$
and the transformation $\omega_u$ preserves the Lebesgue measure on $[0,1]$, we have 
$$
\int_{U_i} T_uf(t)du\;=\;\int _{U_i} f(s)\,ds \cdot \chi_{U_i}(t)\;+\; \int_{U_i^c} f(s)\,ds \cdot\chi_{U_i^c}(t)
$$
and
$$
\int _{U_i^c} T_uf(t)\,du\;=\;\int_{U_i^c}f(s)\,ds \cdot \chi_{U_i}(t)\;+\;\int_{U_i}f(s)\,ds \cdot\chi_{U_i^c}(t). 
$$
It is easy to see that $r_{i}(t)= \chi_{U_i}(t) - \chi_{U_i^c}(t)$. Therefore, from the last two relations it follows that
$$
\int\limits _{U_i} T_uf(t)\,du - \int_{U_i^c}T_uf(t)\,du = \int_0^1 f(s) r_{i}(s)\,ds \cdot r_{i}(t).
$$ 
This and (\ref{20})--(\ref{22}) yield
$$
Q_{i}(f) = \int_0^1 f(s) r_{i}(s)\,ds, ~~ i = 1, 2, \ldots,
$$
i.e., $Q = P$, and Proposition \ref{Pro3} is proved.
\end{proof}

The following result, in fact, is known. However, we provide its proof for completeness.

%%%%%%%%%%%%%%%%% Lemma 2
\begin{lemma} \label{Lem2}
Suppose that the Rademacher sequence is equivalent in a Banach function lattice $X$ on $[0, 1]$ to the unit vector basis in $l_2$, 
i.e., for some constant $C > 0$ and all $a = (a_k)_{k=1}^{\infty} \in l_2$ 
\begin{equation} \label{23}
C^{-1} \, \| a \|_{l_2} \leq \Big\| \sum_{k=1}^{\infty} a_k \, r_k \Big\|_{X} \leq C \, \| a \|_{l_2}.
\end{equation}
Moreover, let $\{r_k\} \subset X^{\prime}$, where $X^{\prime}$ is the K\"othe dual space for $X$. Then, the orthogonal 
projection $P$ is bounded in $X$ if and only if there exists a constant $C_1 > 0$ such that for every $a = (a_k)_{k=1}^{\infty} \in l_2$ 
\begin{equation} \label{24}
\Big\| \sum_{k=1}^{\infty} a_k \, r_k \Big\|_{X^{\prime}} \leq C_1 \, \| a \|_{l_2}.
\end{equation}
\end{lemma}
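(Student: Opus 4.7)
The plan is a standard duality argument exploiting the pairing of $X$ with $X'$ and the self-duality of $\ell_2$.

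For the forward direction, assume $P$ is bounded in $X$. I would fix $a=(a_k)\in\ell_2$ and estimate $\|\sum_k a_k r_k\|_{X'}$ by duality: for every $g\in X$ with $\|g\|_X\leq 1$, one has
$$\int_0^1 \Bigl(\sum_{k=1}^\infty a_k r_k(t)\Bigr) g(t)\,dt = \sum_{k=1}^\infty a_k c_k(g), \quad \text{where } c_k(g)=\int_0^1 g(s)r_k(s)\,ds.$$
The key observation is that $Pg = \sum_k c_k(g)\,r_k$, so the lower bound in (\ref{23}) yields $\|(c_k(g))\|_{\ell_2}\leq C\|Pg\|_X\leq C\|P\|_{X\to X}\|g\|_X$. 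Cauchy--Schwarz in $\ell_2$ then gives
$$\Bigl|\sum_{k=1}^\infty a_k c_k(g)\Bigr| \leq \|a\|_{\ell_2}\|(c_k(g))\|_{\ell_2}\leq C\|P\|_{X\to X}\|a\|_{\ell_2}\|g\|_X,$$
and taking the supremum over $g$ produces (\ref{24}) with $C_1 = C\|P\|_{X\to X}$.

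For the converse, assume (\ref{24}) and take $f\in X$. Since $\{r_k\}\subset X'$, each coefficient $c_k := \int_0^1 f(s)r_k(s)\,ds$ is well defined with $|c_k|\leq \|f\|_X \|r_k\|_{X'}$. To handle the whole sequence at once, I would test against an arbitrary $a\in\ell_2$ with $\|a\|_{\ell_2}\leq 1$:
$$\Bigl|\sum_{k=1}^\infty a_k c_k\Bigr| = \Bigl|\int_0^1 f(t)\sum_{k=1}^\infty a_k r_k(t)\,dt\Bigr| \leq \|f\|_X \Bigl\|\sum_{k=1}^\infty a_k r_k\Bigr\|_{X'} \leq C_1\|f\|_X,$$
by the hypothesis (\ref{24}). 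Taking the supremum over such $a$ and using the $\ell_2$ self-duality gives $\|(c_k)\|_{\ell_2}\leq C_1\|f\|_X$. Finally, the upper bound in (\ref{23}) ensures convergence of $Pf=\sum_k c_k r_k$ in $X$ and
$$\|Pf\|_X \leq C\,\|(c_k)\|_{\ell_2}\leq CC_1\|f\|_X,$$
so $P$ is bounded on $X$ with norm at most $CC_1$.

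There is no real obstacle: the statement is essentially a transcription of the fact that if a sequence spans $\ell_2$ in both $X$ and $X'$, then the associated orthogonal projection is bounded on $X$. The only minor points requiring care are the well-definedness of the coefficients $c_k(g)$ and $c_k$ (guaranteed by $\{r_k\}\subset X'$), and the convergence of the relevant Rademacher series in $X$ and $X'$ (guaranteed by (\ref{23}) and (\ref{24}) respectively). Both arguments are direct pairings and no interpolation or additional structure of $X$ is used.
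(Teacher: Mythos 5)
Your proof is correct and follows essentially the same duality argument as the paper: both directions rest on pairing $f\in X$ against Rademacher sums in $X'$ and using (\ref{23}) to pass between $\|Pf\|_X$ and $\|(c_k(f))\|_{\ell_2}$. The only cosmetic difference is in the direction (\ref{24}) $\Rightarrow$ boundedness of $P$, where the paper takes the extremal choice $a_k=c_k(f)$ for finite $n$ and divides, while you test against an arbitrary unit vector of $\ell_2$ and invoke self-duality; these are the same computation.
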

%%%%%%%%%%%%%%%%
\begin{proof}
First, suppose that (\ref{24}) holds. For arbitrary $f \in X$, we set
$$
c_k(f) = \int_0^1 f(s) r_k(s) \, ds, ~ k = 1, 2, \ldots
$$
By (\ref{24}), for every $n \in \mathbb N$, we have
\begin{eqnarray*}
\sum_{k=1}^n c_k(f)^2 
&=&
\int_0^1 f(s) \sum_{k=1}^n c_k(f) r_k(s) \, ds \leq 
\| f\|_X \Big\| \sum_{k=1}^n c_k(f) r_k \Big\|_{X^{\prime}} \\
&\leq&
C_1 \, \| f\|_X \Big( \sum_{k=1}^n c_k(f)^2 \Big)^{1/2},
\end{eqnarray*}
and therefore, taking into account (\ref{23}), we obtain
\begin{equation*}
\| Pf \|_X \leq C \Big( \sum_{k=1}^{\infty} c_k(f)^2 \Big)^{1/2} \leq C \cdot C_1 \| f \|_X.
\end{equation*}
Thus, $P$ is bounded in $X$.

Conversely, if $P$ is a bounded projection in $X$, then from (\ref{23}) it follows that
\begin{eqnarray*}
\int_0^1 f(t) \sum_{k=1}^n a_k r_k(t) \, dt
&=&
\sum_{k=1}^n a_k \cdot c_k(f) \leq \| a \|_{l_2} \, \Big( \sum_{k=1}^n c_k(f)^2 \Big)^{1/2} \\
&\leq&
C \, \| a \|_{l_2} \, \| Pf \|_X \leq C \, \| P \|_{X \rightarrow X}  \, \| a \|_{l_2} \, \| f \|_X 
\end{eqnarray*}
for each $n \in \mathbb N$, all $a = (a_k)_{k=1}^{\infty} \in l_2$ and $f \in X$. Hence,
\begin{equation*}
\Big\| \sum_{k=1}^{\infty} a_k \, r_k \Big\|_{X^{\prime}} \leq C  \, \| P \|_{X \rightarrow X} \, \| a \|_{l_2},
\end{equation*}
and (\ref{24}) is proved.
\end{proof}

%%%%%%%%%%%%%%%%%%%% Proof of Theorem 2
\proof[Proof of Theorem \ref{Thm3}: the case when (\ref{10}) holds] 
In view of Lemmas \ref{Lem1}, \ref{Lem2} and Proposition \ref{Pro2} it is sufficient to prove that
\begin{equation} \label{25}
\limsup_{n \rightarrow \infty} \frac{1}{ \sqrt{n}} \, \Big\| \sum_{k=1}^n r_k \Big\|_{(M_{1, w}^d)^{\prime}} = \infty.
\end{equation}
For every $m \in \mathbb N$ such that $\sqrt{m/2} \in \mathbb N$ we consider the set
$$
E_m: = \{t \in [0, 1]: 0 \leq \sum_{k=1}^{2m} r_k(t) \leq \sqrt{m/2}\, \}.
$$
Clearly, $E_m = \cup_{k \in S_m} I_k^{2m}$, where $S_m \subset \{1, 2, \ldots, 2^{2m}\}$. Also, it is 
easy to see that $|E_m| \rightarrow 0$ as $m \rightarrow \infty$. Denoting
$$
f_m: = \frac{1}{w(|E_m|)} \, \chi_{E_m}, ~ m \in \mathbb N,
$$
we show that
\begin{equation} \label{26}
\| f_m \|_{M_{1, w}^d} \leq 1 ~~ {\rm for ~ all} ~~m \in \mathbb N.
\end{equation}
In fact, let $I$ be a dyadic interval from $[0, 1]$. Clearly, we can assume that  $I \cap E_m \not = \emptyset$. 
Then, by using the quasi-concavity of $w$, we have
\begin{equation*}
\frac{w(|I|)}{|I|} \int_I |f_m(t)|\, dt = \frac{w(|I|)}{|I|} \cdot \frac{|I \cap E_m|)}{w(|E_m|)} 
\leq \frac{w(|I|)}{|I|} \cdot \frac{|I \cap E_m|}{w(|I \cap E_m|)} \leq 1,
\end{equation*}
and (\ref{26}) is proved.

From (\ref{26}) it follows that
\begin{eqnarray*}
 \Big\| \sum_{k=1}^{2m} r_k \Big\|_{(M_{1, w}^d)^{\prime}} 
&\geq&
\int_0^1 \Big| \sum_{k=1}^{2m} r_k(t) \Big| \cdot f_m(t) \, dt = 
\frac{1}{w(|E_m|)} \int_{E_m}  \Big| \sum_{k=1}^{2m} r_k(t) \Big| \, dt \\
&=& 
\frac{1}{w(|E_m|)} \, 2^{-2m} \sum_{i \in S_m}  \Big| \sum_{k=1}^{2m} \varepsilon_k^i \Big|,
\end{eqnarray*}
where $ \varepsilon_k^i  = {\rm sign} ~ r_k |_{\Delta_i^{2m}}$, $k = 1, 2, \ldots, 2m, ~i \in S_m.$
Denoting $\sigma_m:= \sum_{i \in S_m} | \sum_{k=1}^{2m} \varepsilon_k^i |,$ by the definition of $E_m$, we obtain
\begin{equation} \label{27}
\sigma_m = 2 \cdot \sum_{m - \sqrt{m/2} \leq k \leq m} C_k^{2m} (m - k) = 2 \cdot \sum_{k=1}^{ \sqrt{m/2}} C_{m - k}^{2m} \cdot  k, 
\end{equation}
where $C^n_i=\frac{n!}{i!(n-i)!},$ $n=1,2,\dots,$ $i=0,1,\dots,n.$
Let us estimate the ratio ${C_{m - k}^{2m} }/{C_m^{2m} }$ for $1 \leq k \leq \sqrt{m/2}$ from below. At first,
\begin{eqnarray*}
\dfrac{C_{m - k}^{2m} }{C_m^{2m} }
&=&
\frac{(m!)^2}{(m-k)! (m+k)!} = \frac{(m-k+1) \cdot \ldots \cdot (m-1) \cdot m}{(m+1) \cdot \cdot \ldots \cdot (m+k-1) \cdot (m+k)} \\
&=&
\frac{m}{m+k}  \cdot\frac{(m-k+1) \cdot \ldots \cdot (m-1)}{(m+1) \cdot \ldots \cdot (m+k-1)} = \frac{m}{m+k} \cdot 
\prod_{j=1}^{k-1} \frac{1- \frac{j}{m}}{1 + \frac{j}{m}}\\
&=&
\frac{m}{m+k} \cdot \exp \Big( \sum_{j=1}^{k-1} \log \frac{1- \frac{j}{m}}{1 + \frac{j}{m}}\Big).
\end{eqnarray*}
Next, we will need the following elementary inequality
\begin{equation} \label{28}
\log \frac{1- t}{1 + t} + 2 \, t + 2 \, t^3 \geq 0 ~~ {\rm for ~ all} ~~ 0 \leq t \leq \frac{1}{2}.
\end{equation}
Indeed, we set
$$
\varphi(t):= \log \frac{1- t}{1 + t} + 2 \, t + 2 \, t^3.
$$
Then, $\varphi(0) = 0$. Moreover, for all $t \in [0, 1/2]$ we have
$$
\varphi^{\prime}(t) = - \frac{2}{1-t^2} + 2 + 6 t^2 = \frac{2 t^2 (2 - 3 t^2)}{1-t^2} \geq 0.
$$
Thus, $\varphi(t)$ increases on the interval $[0, 1/2]$, and (\ref{28}) is proved. 

From the above formula, inequality (\ref{28}) and the condition $1 \leq k \leq \sqrt{m/2}$ we obtain
\begin{eqnarray*}
\dfrac{C_{m - k}^{2m} }{C_m^{2m} }
&\geq&
\frac{m}{m+k} \exp \Big( - \frac{2}{m} \sum_{j=1}^{k-1} j - \frac{2}{m^3} \sum_{j=1}^{k-1} j^3 \Big) \\
&=& 
\frac{m}{m+k} \exp \Big( \frac{-k(k-1)}{m} \Big) \exp \Big( \frac{- (k-1)^2 k^2}{2 m^3} \Big) \\
&\geq&
\frac{1}{2} \, \exp \Big(- \frac{k^2}{m} - \frac{1}{m} \Big).
\end{eqnarray*}
Combining this estimate with equality (\ref{27}), we infer
%%%%%%%%%%%%%%%%% 29
\begin{equation} \label{29}
\sigma_m = 2 \cdot \sum_{k=1}^{ \sqrt{m/2}} \frac{C_{m - k}^{2m}}{C_m^{2m}} \cdot  k \cdot C_m^{2m} 
\geq C_m^{2m} \cdot e^{-1/m} \cdot \sum_{k=1}^{ \sqrt{m/2}} e^{- \frac{k^2}{m}} \cdot k.
\end{equation}
The function $\psi(u) = e^{-\frac{u^2}{m}} \cdot u$ increases on the interval $[0,  \sqrt{m/2}]$ because of
$$
\psi^{\prime}(u) = e^{-\frac{u^2}{m}} + u e^{-\frac{u^2}{m}} (- {2u}/{m}) = e^{-\frac{u^2}{m}} (1 - {2 u^2}/{m}) \geq 0
$$
for $0 \leq u \leq \sqrt{m/2}$. Therefore, 
\begin{eqnarray*}
\sum_{k=1}^{ \sqrt{m/2}} e^{- \frac{k^2}{m}} \cdot k
&>&
\sum_{k=1}^{ \sqrt{m/2}} \int_{k-1}^k e^{- \frac{u^2}{m}} \cdot u \, du = \frac{m}{2} (1 - \frac{1}{\sqrt{e}}) \geq \frac{1}{3} m.
\end{eqnarray*}
Moreover, an easy calculation, by using the Stirling formula, shows that
$$
\lim_{m \rightarrow \infty} C_m^{2m} 4^{-m} \sqrt{\pi m} = 1.
$$
Thus, from the above and (\ref{29}) it follows that
\begin{eqnarray*}
 \Big\| \sum_{k=1}^{2m} r_k \Big\|_{(M_{1, w}^d)^{\prime}} 
&\geq&
\frac{1}{w(|E_m|)} \, 2^{-2m} \sum_{i \in S_m}  \Big| \sum_{k=1}^{2m} \varepsilon_k^i \Big| = 
\frac{1}{w(|E_m|)} \, 2^{-2m} \, \sigma_m \\
&\geq&
\frac{1}{w(|E_m|)} \, 2^{-2m} \cdot  C_m^{2m} 
\cdot e^{-1/m} \cdot \sum_{k=1}^{ \sqrt{m/2}} e^{- \frac{k^2}{m}} \cdot k\\
&\geq&
\frac{1}{w(|E_m|)} \, 4^{-m} \cdot  C_m^{2m} \cdot e^{-1/m} \cdot \frac{1}{3} m \approx \frac{\sqrt{m}}{3 \sqrt{\pi} \, w(|E_m|)} 
\end{eqnarray*}
for all $m \in \mathbb N$ such that $\sqrt{m/2} \in \mathbb N$. Since $|E_m| \rightarrow 0$, then by (\ref{10}) $w(|E_m|) \rightarrow 0$ 
as $m \rightarrow \infty$. Hence, the preceding inequality implies (\ref{25}) and the proof is complete.
\endproof

%%%%%%%%%%%%%%%%%%%%%%%%%%%%%%%%%%%%%%%%
%%%%%%%%%%%%%%%%%%%%%%%%%%%%%%%%%%%%%%%%% Section 5
\section{Structure of Rademacher subspaces in Morrey spaces}

Applying Theorem \ref{Thm1} allows us also to study the geometric structure of Rademacher subspaces in Morrey 
spaces $M_{p, w}$.

%%%%%%%%%%%%%%%%%%%%%%%%%%%%%%%%%%Theorem 4
\begin{theorem} \label{Thm4}
Let $1 \leq p < \infty$ and $\lim_{t \rightarrow 0^+} w(t) = 0$. Then every infinite-dimensional subspace of 
${\cal R}_p$ is either isomorphic to $l_2$ or contains a subspace, which is isomorphic to $c_0$ and is complemented in ${\cal R}_p$.
\end{theorem}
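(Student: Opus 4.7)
The plan is to exploit Theorem~\ref{Thm1}, which (writing $\|(a_k)\|_w := \sup_m w(2^{-m})\sum_{k=1}^m|a_k|$) gives $\|\sum a_k r_k\|_{M_{p,w}} \asymp \|(a_k)\|_{l_2} + \|(a_k)\|_w$, together with Corollary~\ref{Cor1}, which says $\{r_k\}$ is an unconditional basis of $\mathcal{R}_p$. If (\ref{10}) holds then $\mathcal{R}_p \simeq l_2$ by Corollary~\ref{Cor2} and every infinite-dimensional subspace is isomorphic to $l_2$, so I assume throughout that (\ref{10}) fails. Fix an infinite-dimensional $X \subset \mathcal{R}_p$. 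The coefficient map $J:\mathcal{R}_p \to l_2$, $\sum a_k r_k \mapsto (a_k)$, is bounded because by (\ref{7}) and the left Khintchine inequality $\|Jf\|_{l_2} \leq A_p^{-1}\|f\|_{L_p} \leq A_p^{-1}\|f\|_{M_{p,w}}$.

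The dichotomy is whether $J|_X$ is bounded below. If it is, then $J$ embeds $X$ as a closed infinite-dimensional subspace of $l_2$, so $X \simeq l_2$ and the first alternative holds. Otherwise there exist $f_n \in X$ with $\|f_n\|_{M_{p,w}} = 1$ and $\|J f_n\|_{l_2} \to 0$; in particular each Rademacher coordinate of $f_n$ tends to $0$. By the Bessaga--Pe\l czy\'nski selection principle, a subsequence of $(f_n)$, after a small perturbation, is equivalent in $\mathcal{R}_p$ to a block basis $u_n = \sum_{k \in F_n} c_k^{(n)} r_k$ with $F_1 < F_2 < \cdots$ finite consecutive blocks, still satisfying $\|u_n\|_{M_{p,w}} \asymp 1$ and $\|u_n\|_{l_2} \to 0$. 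Set $p_n := \min F_n$ (so $p_n \to \infty$) and $\Sigma_n := \sum_{k \in F_n}|c_k^{(n)}|$. Using $\lim_{t\to 0^+} w(t) = 0$, I extract a further subsequence (still called $u_n$) for which
\begin{equation*}
\|u_n\|_{l_2} \leq 2^{-n}, \qquad w(2^{-p_n}) \sum_{k<n}\Sigma_k \leq 2^{-n},
\end{equation*}
which is the analogue of the geometric thinning in the proof of Proposition~\ref{Pro2}.

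The claim is that $(u_n)$ is equivalent in $\mathcal{R}_p$ to the unit vector basis of $c_0$. For $f = \sum_n \beta_n u_n$ with $(\beta_n) \in c_0$, the disjointness of supports and the summability of $\|u_n\|_{l_2}^2$ give $\|f\|_{l_2} \leq C\|(\beta_n)\|_{c_0}$. For $\|f\|_w$, fix $m$ and let $s$ be the unique index with $p_s \leq m < p_{s+1}$: the straddling block $F_s$ contributes $w(2^{-m})|\beta_s|\sum_{k \in F_s,\, k \leq m}|c_k^{(s)}| \leq |\beta_s|\,\|u_s\|_w \lesssim |\beta_s|$, while the earlier blocks contribute $w(2^{-m})\|(\beta_n)\|_{c_0}\sum_{k<s}\Sigma_k \leq w(2^{-p_s})\|(\beta_n)\|_{c_0}\sum_{k<s}\Sigma_k \leq 2^{-s}\|(\beta_n)\|_{c_0}$. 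Hence $\|f\|_{M_{p,w}} \lesssim \|(\beta_n)\|_{c_0}$ by Theorem~\ref{Thm1}. The matching lower bound is immediate from the uniform unconditionality of block bases of $\{r_k\}$ (Corollary~\ref{Cor1}) and $\inf_n\|u_n\|_{M_{p,w}} > 0$. Thus $[u_n]\simeq c_0$ and, after undoing the perturbation, the closed span of the original $(f_n)$ is a copy of $c_0$ sitting inside $X$; since $\mathcal{R}_p$ is separable, Sobczyk's theorem yields a bounded projection of $\mathcal{R}_p$ onto this subspace.

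The main obstacle is the bookkeeping in the inductive thinning: we have to force both $\|u_n\|_{l_2} \to 0$ rapidly enough and $w(2^{-p_n})$ to beat the growing cumulative sum $\sum_{k<n}\Sigma_k$, which a priori is controlled only by the crude estimate $\Sigma_k \leq C/w(2^{-\max F_k})$ coming from Theorem~\ref{Thm1}. Once the right subsequence is secured, the $c_0$-estimates reduce to the clean splitting at the boundary block sketched above, and the rest of the argument is standard.
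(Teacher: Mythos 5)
Your proof is correct and follows essentially the same route as the paper: the same dichotomy via the boundedness below of the coefficient map into $l_2$, extraction of a normalized sequence with vanishing $l_2$-coefficient norm, the Bessaga--Pe{\l}czy\'nski selection principle to pass to a block basis, a thinning argument giving equivalence to the unit vector basis of $c_0$ via the two-term norm of Theorem~\ref{Thm1}, and Sobczyk's theorem for complementability. The only (harmless) deviations are that you apply the selection principle directly to the $f_n$ --- legitimate, since $\|Jf_n\|_{l_2}\to 0$ already forces coordinatewise convergence to zero, so the paper's detour through weakly null differences $f_{n_{2k+1}}-f_{n_{2k}}$ and the shrinking-basis property of Proposition~\ref{Pro4} is bypassed --- and that your inductive condition $w(2^{-p_n})\sum_{k<n}\Sigma_k\le 2^{-n}$ replaces the paper's lacunarity condition (\ref{46}) feeding into Proposition~\ref{Pro5}, while serving the same purpose in the $c_0$ upper estimate.
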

%%%%%%%%%%%%%%%%%%%%%%%%%%%

The following two propositions are main tools in the proof of the above theorem.

%%%%%%%%%%%%%%%%% Proposition 4
\begin{proposition} \label{Pro4}
Suppose that $1 \leq p < \infty$ and $\lim_{t \rightarrow 0^+} w(t) = 0$. Then the Rademacher functions form a shrinking basis 
in ${\cal R}_p$.
\end{proposition}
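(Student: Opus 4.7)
The plan is to invoke the classical criterion of James (see, e.g., \cite[Theorem 1.c.9]{LT}): an unconditional basis of a Banach space is shrinking if and only if the space contains no subspace isomorphic to $l_1$, equivalently (for unconditional bases), no normalized block basic sequence of the basis is equivalent to the $l_1$ unit vector basis. Since $\{r_k\}$ is unconditional in ${\cal R}_p$ by Corollary \ref{Cor1}, it suffices to rule out $l_1$-equivalent block bases. Let $(u_k)$ be any normalized block basis, $u_k = \sum_{i \in I_k} b_i^{(k)} r_i$, with disjoint intervals $I_k = (n_{k-1}, n_k]$ and $\|u_k\|_{M_{p, w}} = 1$. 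Theorem \ref{Thm1} produces a constant $C > 0$ such that $\|b^{(k)}\|_{l_2} \leq C$ and $\|b^{(k)}\|_w := \sup_{m} w(2^{-m}) \sum_{i \in I_k,\, i \leq m} |b_i^{(k)}| \leq C$ for every $k$. Since $n_k \to \infty$ and, by hypothesis, $w(2^{-n}) \to 0$, I would pass to a subsequence $(u_{k_j})$ with $w(2^{-n_{k_{j+1}}}) \leq \tfrac{1}{2} w(2^{-n_{k_j}})$ for all $j$.

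The crux is the $l_2$-domination
\[
\Big\| \sum_j c_j u_{k_j} \Big\|_{M_{p, w}} \leq K \, \|(c_j)\|_{l_2}
\]
for every scalar sequence $(c_j)$. Applying Theorem \ref{Thm1} to the concatenated coefficients $\tilde b$, where $\tilde b_i = c_j b_i^{(k_j)}$ for $i \in I_{k_j}$, the $l_2$-part is immediate: $\|\tilde b\|_{l_2}^2 = \sum_j c_j^2 \|b^{(k_j)}\|_{l_2}^2 \leq C^2 \|(c_j)\|_{l_2}^2$. For the $w$-part, fix $m$, set $J = \max\{j : n_{k_j} \leq m\}$, and decompose $\sum_{i \leq m} |\tilde b_i|$ block by block. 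For a completed block $k_j$ with $j \leq J$, the geometric decay yields $w(2^{-m}) \leq 2^{-(J - j)} w(2^{-n_{k_j}})$, whence
\[
w(2^{-m}) \, |c_j| \, \|b^{(k_j)}\|_{l_1} \leq 2^{-(J - j)} |c_j| \, \|b^{(k_j)}\|_w \leq C \cdot 2^{-(J - j)} |c_j|.
\]
Summing over $j \leq J$ and applying Cauchy--Schwarz on the geometric weights gives a bound by a constant times $\|(c_j)\|_{l_2}$; the partial boundary block contributes at most $C |c_{J+1}| \leq C \|(c_j)\|_{l_2}$. Taking the supremum over $m$ delivers $\|\tilde b\|_w \leq K' \|(c_j)\|_{l_2}$ and the required $l_2$-domination.

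This $l_2$-upper bound on $(u_{k_j})$ precludes $l_1$-equivalence: the choice $c_j = 1$ for $j \leq N$ yields $\|\sum_{j \leq N} u_{k_j}\|_{M_{p, w}} = O(\sqrt N)$, whereas $l_1$-equivalence requires the linear lower bound $\geq \alpha N$. Hence no normalized block basis of $\{r_k\}$ is equivalent to the $l_1$-basis, and by the James criterion $\{r_k\}$ is a shrinking basis of ${\cal R}_p$. The main technical obstacle is the block-by-block estimate of $\|\tilde b\|_w$: applying the triangle inequality pointwise on $|\tilde b_i|$ would only produce the trivial $l_1$-bound, so the geometric decay of $w(2^{-n_{k_j}})$ must be used to telescope the cross-block contributions into an $l_2$-bound.
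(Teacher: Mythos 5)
Your argument is correct, and its computational heart coincides with the paper's: after passing to a subsequence of blocks whose right endpoints satisfy $w(2^{-n_{k_{j+1}}})\le\frac12 w(2^{-n_{k_j}})$, you telescope the $w$-part of the quasi-norm from Theorem \ref{Thm1} block by block, which is exactly the paper's chain of estimates (\ref{37})--(\ref{39}). The difference is only in the wrapper. The paper verifies the shrinking property directly by contradiction: a functional not vanishing on the tails produces a semi-normalized block basis $(u_i)$ with $\varphi(u_i)\ge\varepsilon/2$ and geometric weight decay, and then the same $\ell_2$-type upper estimate shows $\sum\gamma_n u_n$ converges for positive $\gamma\in l_2\setminus l_1$ while $\varphi(\sum\gamma_n u_n)=\infty$ --- in effect the authors inline the relevant direction of James's criterion. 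You instead cite James's theorem for unconditional bases and rule out $l_1$ via the $O(\sqrt N)$ growth of $\|\sum_{j\le N}u_{k_j}\|$; this is shorter and cleanly isolates the one estimate that matters, at the cost of importing a nontrivial external result. Two small points you should make explicit: (i) the passage from ``$(r_k)$ not shrinking'' (equivalently, ${\cal R}_p\supset l_1$) to ``some \emph{normalized block basis} of $(r_k)$ is $l_1$-equivalent'' needs the standard Bessaga--Pe{\l}czy\'nski/unconditionality argument (it is contained in the proof of Theorem 1.c.9 in \cite{LT77}, which is where James's criterion actually lives, not in \cite{LT}); and (ii) since you only prove the $\ell_2$-domination for a subsequence with geometric decay, you should note that $l_1$-equivalence is inherited by subsequences, so ruling it out for the subsequence rules it out for the original block basis. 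Neither point is a gap, just a line each to add.
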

%%%%%%%%%%%
\begin{proof}
To prove the shrinking property of $\{r_n\}_{n=1}^{\infty}$ we need to show that for every $\varphi \in (M_{p, w})^*$ we have
%%%%%%%%%%%%%%% 30
\begin{equation} \label{30}
\| \varphi_{\big | [r_n]_{n=m}^{\infty}} \|_{ (M_{p, w})^*} \rightarrow 0 ~ {\rm as} ~ m \rightarrow \infty.
\end{equation}
Assume that (\ref{30}) does not hold. Then there exist $\varepsilon \in (0, 1), \varphi \in (M_{p, w})^*$ with 
$\| \varphi \|_{(M_{p, w})^*} = 1$, and a sequence of functions 
$$
f_n = \sum_{k=m_n}^{\infty} a_k^{m_n} r_k, ~~ {\rm where} ~~ m_1 < m_2 < \ldots, 
$$
such that $\| f_n \|_{M_{p, w}} = 1, ~ n = 1, 2, \ldots$ and
\begin{equation} \label{31}
\varphi(f_n) \geq \varepsilon ~~ {\rm for ~ all} ~~ n = 1, 2, \ldots .
\end{equation}
Let us  construct two sequences of positive integers $\{q_i\}_{i=1}^{\infty}$ and $\{p_i\}_{i=1}^{\infty}, 
1 \leq q_1 < p_1 < q_2 < p_2 < \ldots$ as follows. Setting $q_1 = m_1$, we can find $p_1 > q_1$, so that 
$\| \sum_{n=p_1+1}^{\infty} a_k^{q_1} r_k \|_{M_{p, w}} \leq {\varepsilon}/{2}$. Now, if the numbers 
$1 \leq q_1 < p_1 < q_2 < p_2 < \ldots q_{i-1} < p_{i-1}, i \geq 2,$ are chosen, we take for $q_i$ the smallest 
of numbers $m_n$, which is larger than $p_{i-1}$ such that
\begin{equation} \label{32}
w(2^{-q_{i}}) \leq \frac{1}{2} \, w(2^{-q_{i -1}}).
\end{equation}
Moreover, let $p_i > q_i$ be such that
\begin{equation} \label{33}
\Big\| \sum_{n=p_i+1}^{\infty} a_k^{q_i} r_k \Big\|_{M_{p, w}} \leq {\varepsilon}/{2}.
\end{equation}
We set $\alpha_k^i:= a_k^{q_i}$ if $q_i \leq k \leq p_i$, and $\alpha_k^i:= 0$ if $p_i < k < q_{i+1}, i = 1, 2, \ldots$. Then, the sequence
$$
u_i:= \sum_{k = q_i}^{q_{i+1} - 1} \alpha_k^i \, r_k, ~~ i = 1, 2, \ldots
$$
is a block basis of the Rademacher sequence. Moreover, by the definition of $u_i$,
\begin{equation} \label{34}
\sup_{i = 1, 2, \ldots} \| u_i \|_{M_{p, w}} \leq 2,
\end{equation}
and from the choice of the functional $\varphi$ and (\ref{33}) it follows that
\begin{equation} \label{35}
\varphi(u_i) = \varphi\Big( \sum_{k = q_i}^{p_i} a_k^{q_i} \, r_k\Big) = \varphi(f_i)  - \varphi \Big( \sum_{k = p_i + 1}^{\infty} a_k^{q_i} \, r_k\Big)
\geq  \varphi(f_i) - \Big\| \sum_{k = p_i + 1}^{\infty} a_k^{q_i} \, r_k \Big\|_{M_{p, w}} \geq \frac{\varepsilon}{2}.
\end{equation}
Let $\{\gamma_n\}_{n=1}^{\infty}$ be an arbitrary sequence of positive numbers such that
\begin{equation} \label{36}
\sum_{n=1}^{\infty} \gamma_n^2 < \infty ~~  {\rm and} ~~ \sum_{n=1}^{\infty} \gamma_n = \infty.
\end{equation}
We show that the series $\sum_{n=1}^{\infty} \gamma_n\, u_n$ converges in $M_{p, w}$. To this end, we set
$b_k:= \alpha_k^i \cdot \gamma_i$ if $q_i \leq k < q_{i+1}$. For every $m \in \mathbb N$, by Theorem \ref{Thm1}, 
\begin{equation} \label{37}
\Big\| \sum_{n=m}^{\infty} \gamma_n \, u_n \Big\|_{M_{p, w}} = \Big\| \sum_{k = q_m}^{\infty} b_k r_k \Big\|_{M_{p, w}} \approx 
\Big(\sum_{k=q_m}^{\infty} b_k^2 \Big)^{1/2} + \sup_{l \geq q_m} w(2^{-l}) \cdot \sum_{k=q_m}^l |b_k|.
\end{equation}
Let us estimate both summands from the right-hand side of (\ref{37}). At first, from (\ref{34}) and Theorem \ref{Thm1} 
it follows that
\begin{equation} \label{38}
\sum_{k=q_m}^{\infty} b_k^2 = \sum_{i=m}^{\infty} \gamma_i^2 \sum_{k=q_i}^{q_{i+1} - 1} (\alpha_k^i)^2 \leq 
C_1\, \sum_{i=m}^{\infty} \gamma_i^2.
\end{equation}
Similarly, if $q_m < \ldots < q_{m+r} \leq l < q_{m+r+1}$ for some $r = 1, 2, \ldots$, then
\begin{eqnarray*}
\sum_{k=q_m}^{l} |b_k| 
&=& 
\sum_{i=m}^{m+r-1} |\gamma_i| \sum_{k = q_i}^{q_{i+1} - 1} |\alpha_k^i| + |\gamma_{m+r}| \sum_{k = q_{m+r}}^{l}  |\alpha_k^{m+r}| \\
&\leq&
C_2 \, \Big( \sum_{i=m}^{m+r-1} \frac{|\gamma_i|}{w(2^{-q_{i+1}})} + \frac{|\gamma_{m+r}|}{w(2^{-l})}  \Big).
\end{eqnarray*}
Combining this inequality together with (\ref{32}), we obtain
\begin{eqnarray*}
w(2^{-l}) \, \sum_{k=q_m}^{l} |b_k| 
&\leq&
C_2  \, \Big( \sum_{i=m}^{m+r-1} |\gamma_i| \frac{w(2^{-q_{m+r}})}{w(2^{-q_{i+1}})} + |\gamma_{m+r}|  \Big) \\
&\leq& 
C_2  \, \Big( \sum_{i=m}^{m+r-1} |\gamma_i| \, 2^{-m-r+i+1} + |\gamma_{m+r}|  \Big) \\
&\leq& 
C_2  \, \max_{i \geq m}  |\gamma_i| \Big( \sum_{j=0}^{r-1}\, 2^{1+j - r} + 1\Big) < 3 \, C_2 \, \max_{i \geq m}  |\gamma_i|.
\end{eqnarray*}
Clearly, the latter estimate holds also in the simpler case when $q_m \leq l < q_{m+1}$. Thus, for every $m \in \mathbb N$,
\begin{equation} \label{39}
\sup_{l \geq q_m} w(2^{-l}) \, \sum_{k=q_m}^{l} |b_k|  \leq 3 \, C_2 \, \max_{i \geq m}  |\gamma_i|.
\end{equation}
From (\ref{36}) --- (\ref{39}) it follows that
%$$
%\lim_{m \rightarrow \infty} \Big\| \sum_{n=m}^{\infty} \gamma_n \, u_n \Big\|_{M_{p, w}} = 0.
%$$
%Therefore, since $\{u_n\}$ is an unconditional sequence in $M_{p, w}$, we conclude that 
the series $\sum_{n=1}^{\infty} \gamma_n \, u_n$ converges in $M_{p, w}$. At the same time, since $\varphi \in (M_{p, w})^*$, by 
(\ref{35}) and (\ref{36}), we have
$$
\varphi \Big(\sum_{n=1}^{\infty} \gamma_n \, u_n\Big) = \sum_{n=1}^{\infty} \gamma_n \, \varphi(u_n) \geq \frac{\varepsilon}{2} \sum_{n=1}^{\infty} \gamma_n = \infty,
$$
and so (\ref{30}) is proved.
\end{proof}

%%%%%%%%%%%%%%%% Corollary 3
\begin{corollary} \label{Cor3}
Under assumptions of Proposition \ref{Pro4}: 
\begin{itemize} 
\item[(i)] $r_k \rightarrow 0$ weakly in $M_{p, w}$.
\item[(ii)] The Rademacher functions form a basis in the dual space $({\cal R}_p)^*$.
\end{itemize}
\end{corollary}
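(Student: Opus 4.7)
Both parts are intended to follow quickly from the shrinking basis property established in Proposition \ref{Pro4}, so my plan is essentially to unpack that property in each of the two settings.

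For (i), I would start with an arbitrary $\varphi \in (M_{p,w})^*$ and consider its restriction $\psi:=\varphi\big|_{\mathcal{R}_p}\in (\mathcal{R}_p)^*$. Since $\{r_n\}$ is a shrinking basis of $\mathcal{R}_p$ by Proposition \ref{Pro4}, we have
$$
\varepsilon_m:=\|\psi\big|_{[r_n]_{n=m}^\infty}\|_{(\mathcal{R}_p)^*}\longrightarrow 0 \quad\text{as } m\to\infty.
$$
A direct computation shows $\|r_m\|_{M_{p,w}}=\sup_I w(|I|)(\tfrac{1}{|I|}\int_I 1\,dt)^{1/p}=w(1)=1$, so $r_m/\|r_m\|_{M_{p,w}}$ is a unit vector in $[r_n]_{n=m}^\infty$ and
$$
|\varphi(r_m)|=|\psi(r_m)|\le \varepsilon_m\,\|r_m\|_{M_{p,w}}=\varepsilon_m\longrightarrow 0.
$$
Since $\varphi\in (M_{p,w})^*$ was arbitrary, this gives $r_m\to 0$ weakly in $M_{p,w}$.

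For (ii), I would invoke the standard functional analytic fact that if $\{x_n\}$ is a shrinking Schauder basis of a Banach space $X$, then the sequence of biorthogonal coefficient functionals $\{x_n^*\}$ is a basis of $X^*$ (this is, e.g., \cite[p.\ 8]{LT} in the Lindenstrauss--Tzafriri book already cited, or any standard text on Schauder bases). Applying this to $X=\mathcal{R}_p$, whose shrinking basis $\{r_n\}$ is furnished by Proposition \ref{Pro4}, yields at once that the biorthogonal functionals, which one identifies with the Rademacher functions themselves in the usual way, form a basis of $(\mathcal{R}_p)^*$.

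There is no real obstacle here; the substantive work has already been absorbed into the proof of Proposition \ref{Pro4}. The only small point to watch is the distinction between a functional on $\mathcal{R}_p$ and one on the larger space $M_{p,w}$ in the argument for (i), which is handled by restriction and the uniform bound $\|r_m\|_{M_{p,w}}=1$.
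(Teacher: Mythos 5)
Your proof is correct and follows essentially the same route as the paper: part (ii) is exactly the paper's one-line argument (shrinking basis plus Proposition 1.b.1 of Lindenstrauss--Tzafriri, vol.\ I, i.e.\ \cite{LT77} rather than \cite{LT}), and your part (i) supplies the natural restriction argument that the paper leaves implicit, using that $r_m$ lies in the tail span $[r_n]_{n=m}^\infty$ with $\|r_m\|_{M_{p,w}}=1$. No gaps.
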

%%%%%%%%%%%%%%%%
\begin{proof}
Since $\{r_n\}_{n=1}^{\infty}$ is the biorthogonal system to $\{r_n\}$ itself, (ii) follows from Proposition \ref{Pro4} 
and Proposition 1.b.1 in \cite{LT77}.
\end{proof}

%%%%%%%%%%%%%%%%% Proposition 5
\begin{proposition} \label{Pro5}
Let $1 \leq p < \infty$ and $\lim_{t \rightarrow 0^+} w(t) = 0$. Suppose that
$$
u_n = \sum_{k=m_n}^{m_{n+1} -1} a_k \, r_k, ~~ 1 = m_1 < m_2 < \ldots
$$
is a block basis such that $\| u_n \|_{M_{p, w}} = 1$ for all $ n \in \mathbb N$ and $\sum_{k=m_n}^{m_{n+1} -1} a_k^2 \rightarrow 0$ 
as $n \rightarrow \infty$. Moreover, let
\begin{equation} \label{40}
w(2^{- m_{n+1}}) \leq \frac{1}{2} \, w(2^{-m_n}), ~n = 1, 2, \ldots .
\end{equation}
Then the sequence $\{u_n\}_{n=1}^{\infty}$ contains a subsequence equivalent in $M_{p, w}$ to the unit vector basis of $c_0$.
\end{proposition}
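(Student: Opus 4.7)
The plan is to pass to a subsequence of $\{u_n\}$ for which the $l_2$-contribution of the coefficients becomes summable, and then to verify equivalence with the unit vector basis of $c_0$ by means of the Rademacher-norm formula in Theorem~\ref{Thm1}. Set $s_n := \sum_{k=m_n}^{m_{n+1}-1} a_k^2$. Since by hypothesis $s_n \to 0$, I would choose a subsequence (which I shall not rename) with $s_n \leq 2^{-n}$; condition (\ref{40}) is preserved under passing to subsequences (in fact only strengthened). For this subsequence I claim that $\Big\|\sum_n \beta_n u_n\Big\|_{M_{p,w}} \approx \|(\beta_n)\|_{c_0}$.

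The lower bound would follow immediately from the unconditionality of the Rademacher system in $M_{p,w}$ (Corollary~\ref{Cor1}): any block basis of an unconditional basic sequence is itself unconditional with a comparable constant $K$, so $|\beta_{n_0}| = |\beta_{n_0}|\,\|u_{n_0}\|_{M_{p,w}} \leq K\Big\|\sum_n \beta_n u_n\Big\|_{M_{p,w}}$ for every $n_0$, giving $\|(\beta_n)\|_{c_0} \leq K\Big\|\sum_n \beta_n u_n\Big\|_{M_{p,w}}$.

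For the upper bound I would write $f = \sum_n \beta_n u_n = \sum_k \gamma_k r_k$ with $\gamma_k = \beta_n a_k$ for $m_n \leq k < m_{n+1}$ and apply Theorem~\ref{Thm1}. The $l_2$-piece is handled immediately by the choice of subsequence: $\|(\gamma_k)\|_{l_2}^2 = \sum_n \beta_n^2 s_n \leq \|(\beta_n)\|_{c_0}^2 \sum_n 2^{-n} \leq \|(\beta_n)\|_{c_0}^2$. The main obstacle will be estimating the tail term $\sup_l w(2^{-l}) \sum_{k=1}^l |\gamma_k|$. Given $l$, let $n_0$ be the index with $m_{n_0} \leq l < m_{n_0+1}$ and split
\[
\sum_{k=1}^l |\gamma_k| = \sum_{n=1}^{n_0-1} |\beta_n| \sum_{k=m_n}^{m_{n+1}-1} |a_k| + |\beta_{n_0}| \sum_{k=m_{n_0}}^l |a_k|.
\]
Applying Theorem~\ref{Thm1} to each $u_n$ (with $\|u_n\|_{M_{p,w}} = 1$) yields both $w(2^{-l}) \sum_{k=m_{n_0}}^l |a_k| \leq C$ for $m_{n_0} \leq l < m_{n_0+1}$ (which controls the partial-block term by $C\|(\beta_n)\|_{c_0}$) and $\sum_{k=m_n}^{m_{n+1}-1} |a_k| \leq C/w(2^{-m_{n+1}})$.

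For the completed-block contribution, condition (\ref{40}) forces $w(2^{-m_n}) \geq 2^{n_0-n} w(2^{-m_{n_0}})$ for $n \leq n_0$, so the geometric sum $\sum_{n=1}^{n_0-1} 1/w(2^{-m_{n+1}}) \leq 2/w(2^{-m_{n_0}})$ is dominated by twice its last term; since $w(2^{-l}) \leq w(2^{-m_{n_0}})$ this contribution is at most $2C\|(\beta_n)\|_{c_0}$. Combined with the partial-block bound this yields $\sup_l w(2^{-l}) \sum_{k=1}^l |\gamma_k| \leq 3C\|(\beta_n)\|_{c_0}$, and Theorem~\ref{Thm1} then produces the upper bound $\|f\|_{M_{p,w}} \leq C'\|(\beta_n)\|_{c_0}$. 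The hypothesis $s_n \to 0$ enters only to ensure summability of the $l_2$-piece; condition (\ref{40}) is precisely what drives the collapsing geometric sum in the weighted tail estimate.
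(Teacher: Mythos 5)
Your argument is correct and follows essentially the same route as the paper: pass to a subsequence with $s_n\le 2^{-n}$, bound the $l_2$-part directly, control the weighted tail by splitting at the current block and collapsing the geometric sum forced by (\ref{40}), and get the lower bound from unconditionality of the normalized blocks. The only difference is cosmetic — you write out the tail estimate in full, whereas the paper delegates it to the identical computation in the proof of Proposition \ref{Pro4}.
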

%%%%%%%%%%%
\begin{proof}
Passing to a subsequence if it is needed, without loss of generality we may assume that
\begin{equation} \label{41}
\sum_{k=m_n}^{m_{n+1} -1} a_k^2 \leq \cdot 2^{-n}, ~~n = 1, 2, \ldots. 
\end{equation}
%where $\eta \in (0, 1)$ will be chosen later on. 
Suppose that $f = \sum_{n=1}^{\infty} \beta_n \, u_n \in {\cal R}_p$. Setting
$b_k = a_k \beta_i$ if $m_i \leq k < m_{i+1}, i = 1, 2, \ldots$, by Theorem \ref{Thm1}, we obtain
\begin{equation} \label{42}
\| f \|_{M_{p, w}} = \Big\| \sum_{k=1}^{\infty} b_k \, r_k \Big\|_{M_{p, w}} \approx \Big( \sum_{k=1}^{\infty} b_k^2 \Big)^{1/2} + 
\sup_{l \in \mathbb N} w(2^{-l}) \sum_{k=1}^l |b_k|.
\end{equation}
At first, by (\ref{41}),
\begin{equation*}
\sum_{k=1}^{\infty} b_k^2 = \sum_{i=1}^{\infty} \beta_i^2  \sum_{k=m_i}^{m_{i+1} -1} a_k^2 
\leq ( \sup_{i = 1, 2, \ldots} |\beta_i|)^2 \cdot \sum_{i=1}^{\infty} 2^{-i} 
\leq  \| (\beta_i)\|_{c_0}^2.
\end{equation*}
Moreover, precisely in the same way as in the proof of Proposition \ref{Pro4} from (\ref{40}) and the equalities 
$\| u_n \|_{M_{p, w}} = 1, n = 1, 2, \ldots$ it follows that for some constant $C^{\prime} > 0$
$$
 \sup_{l = 1, 2, \ldots} w(2^{-l}) \sum_{k=1}^l |b_k| \leq C^{\prime} \, \| (\beta_i)\|_{c_0}.
$$
Combining the last two inequalities together with (\ref{42}), we conclude that
$\| f \|_{M_{p, w}} \leq C \,  \, \| (\beta_i)\|_{c_0}$ for some constant $C > 0$.

%Conversely, 
%since $\| u_n \|_{M_{p, w}} = 1, n = 1, 2, \ldots$, then choosing $\eta > 0$ in (\ref{40}) sufficiently small, we 
%infer that
%$$
%\max_{m_i \leq l < m_{i+1}} w(2^{-l}) \, \sum_{k=m_i}^l |a_k| \geq \frac{1}{2}.
%$$
Conversely, since $\{u_n\}$ is an unconditional sequence in $M_{p, w}$ and $\| u_n \|_{M_{p, w}} = 1,$ $n = 1, 2, \ldots$, by Theorem \ref{Thm1},
%for some constant $c > 0$ and every $i = 1, 2, \ldots$, we obtain
$\| f \|_{M_{p, w}} \geq c|\beta_i|.$ $i = 1, 2, \ldots$, with some constant $c > 0$.
%$$
%\| f \|_{M_{p, w}} \geq c|\beta_i| \, \| u_i \|_{M_{p, w}} \geq c \, |\beta_i| \, \max_{m_i \leq l < m_{i+1}} w(2^{-l}) \, \sum_{k=m_i}^l |a_k|  
%\geq \frac{c}{2} \, |\beta_i|.
%$$
Hence, $\| f \|_{M_{p, w}} \geq c\, \| (\beta_i)\|_{c_0}$, and the proof is complete.
\end{proof}

%%%%%%%%%%%%%%%%%%%% Proof of Theorem 4
\proof[Proof of Theorem \ref{Thm4}] 
Assume that $X$ is an infinite-dimensional subspace of ${\cal R}_p$ such that for every $f=\sum_{k=1}^{\infty} b_k r_k \in X$ we have
$$
\|f\|_{M_{p, w}} \approx \Big(\sum_{k=1}^{\infty} b_k^2\Big)^{1/2},
$$
with a constant independent of $b_k,$ $k=1,2,\dots$
Then, $X$ is isomorphic to some subspace of $l_2$ and so to $l_2$ itself.

Therefore, if $X$ is not isomorphic to $l_2$, then there is a sequence $\{f_n\}_{n=1}^{\infty} \subset X, 
f_n = \sum_{k=1}^{\infty} b_{n,k} r_k$, such that $\| f_n \|_{M_{p, w}} = 1$ and
\begin{equation} \label{43}
\sum_{k=1}^{\infty} b_{n,k}^2 \rightarrow 0 ~~{\rm as} ~~ n \rightarrow \infty.
\end{equation}
Observe that $\{f_n\}_{n=1}^{\infty}$ does not contain any subsequence converging in $M_{p, w}$-norm. In fact, if 
$\| f_{n_k} - f \|_{M_{p, w}} \rightarrow 0$ for some $\{f_{n_k}\} \subset \{f_n\}$ and $f \in X$, then from Theorem \ref{Thm1} and 
(\ref{43}) it follows that $f =  \sum_{k=1}^{\infty} b_k r_k$, where $b_k = 0$ for all $k = 1, 2, \ldots$. Hence, $f = 0$. On the other 
hand, obviously, $\| f \|_{M_{p, w}} = 1$, and we come to a contradiction.

Thus, passing if it is needed to a subsequence, we can assume that
\begin{equation} \label{44}
\| f_n - f_m \|_{M_{p, w}} \geq \varepsilon > 0 ~~ {\rm for ~ all} ~~ n \neq m.
\end{equation}
Recall that, by Corollary \ref{Cor3}, the sequence $\{r_k\}_{k=1}^{\infty}$ is a basis of the space $({\cal R}_p)^*$. 
Applying the diagonal process, we can find the sequence $\{n_k\}_{k=1}^{\infty}, n_1 < n_2 < \ldots$, such that for every 
$i = 1, 2, \ldots$ there exists $\lim_{k \rightarrow \infty} \int_0^1r_i(s)f_{n_k}(s)\,ds$. Then,
$$
\lim_{k \rightarrow \infty} \int_0^1r_i(s)(f_{n_{2k+1}}(s) - f_{n_{2k}}(s))\,ds = 0 ~~ {\rm for ~ all} ~~ i = 1, 2, \ldots .
$$
Hence, since the sequence $\{f_{n_{2k+1}} - f_{n_{2k}}\}_{k=1}^{\infty}$ is bounded in $M_{p, w}$ we infer that 
$f_{n_{2k+1}} - f_{n_{2k}} \rightarrow 0$ weakly in $M_{p, w}$. Now, taking into account (\ref{44}) and applying the well-known 
Bessaga-Pe{\l}czy\'nski Selection Principle (cf. \cite[Proposition 1.3.10, p. 14]{AK06}), we may construct a subsequence of the 
sequence $\{f_{n_{2k+1}} - f_{n_{2k}}\}_{k=1}^{\infty}$ (we keep for it the same notation) and a block basis 
$$
u_k = \sum_{j=m_k}^{m_{k+1} -1} a_j r_j, ~ 1 = m_1 < m_2 < \ldots,
$$
such that 
\begin{equation} \label{45}
\| u_k - \big(f_{n_{2k+1}} - f_{n_{2k}} \big) \|_{M_{p, w}} \leq B_0^{-1} \cdot 2^{-k-1}, ~~ k = 1, 2, \ldots,
\end{equation}
where $B_0$ is the basis constant of $\{r_k\}$ in ${\cal R}_p$, and 
\begin{equation} \label{46}
w(2^{-m_{k+1}}) \leq \frac{1}{2} \cdot w(2^{-m_k}), ~~ k = 1, 2, \ldots .
\end{equation}
From (\ref{45}) it follows that the sequences $\{u_k\}_{k=1}^{\infty}$ and $\{f_{n_{2k+1}} - f_{n_{2k}}\}_{k=1}^{\infty}$ are equivalent 
in $M_{p, w}$ (cf. \cite[Proposition 1.a.9]{LT77}). Moreover, by Theorem \ref{Thm1} and \eqref{43},
$$
\sum_{j= m_k}^{m_{k+1} - 1} a_j^2 \rightarrow 0 ~~ {\rm as} ~~ k \rightarrow \infty.
$$
This fact together with inequality (\ref{46}) allows us to apply Proposition \ref{Pro5}, which implies that the sequence 
$\{u_k\}_{k=1}^{\infty}$ (and so $\{f_{n_{2k+1}} - f_{n_{2k}}\}_{k=1}^{\infty}$) contains a subsequence equivalent to the unit 
vector basis of $c_0$. Since $\{f_{n_{2k+1}} - f_{n_{2k}}\}_{k=1}^{\infty} \subset X$, then $X$ contains a subspace isomorphic to 
$c_0$. Complementability of this subspace in ${\cal R}_p$ is an immediate consequence of Sobczyk's theorem 
(see \cite[Corollary 2.5.9]{AK06}).
\endproof

%%%%%%%%%%%%% Remark 4
\begin{remark}
{\rm If $\lim_{t \rightarrow 0^+} w(t) > 0$, then $M_{p, w} = L_{\infty}$ and $\{r_k\}$ is equivalent in $M_{p, w}$ 
to the unit vector basis of $l_1$ (cf. Theorem \ref{Thm1}). Observe also that if $\sup_{0 < t \leq 1} w(t) \log_2^{1/2} (2/t) < \infty$, 
then we get another trivial situation: ${\cal R}_p \simeq l_2$ (see Corollary \ref{2}).}
\end{remark}

%%%%%%%%%%%%%%%%%%%%%%%%%%%%%%%%%%

\vspace{3mm}

\noindent
{\footnotesize Department of Mathematics and Mechanics, Samara State University\\
Acad. Pavlova 1, 443011 Samara, Russia} ~{\it E-mail address:} {\tt astash@samsu.ru} \\

\vspace{-3mm}

\noindent
{\footnotesize Department of Mathematics, Lule\r{a} University of Technology\\
SE-971 87 Lule\r{a}, Sweden} ~{\it E-mail address:} {\tt lech@sm.luth.se} \\

\end{document}